\documentclass[10pt,a4paper]{amsart}

\usepackage{verbatim}
\usepackage{geometry}
\usepackage{enumerate}
\usepackage{amsmath,amsfonts,amssymb}
\usepackage{esint}
\usepackage[colorlinks=true, linkcolor=blue, citecolor=blue, urlcolor=blue]{hyperref}
\usepackage{aligned-overset}
\usepackage{xcolor}
\usepackage{upgreek}

\newgeometry{tmargin=2.4cm, bmargin=3cm, lmargin=2.2cm, rmargin=2.2cm}

\numberwithin{equation}{section}

\DeclareMathOperator*{\id}{Id}

\title[Quasilinear Elliptic Equations with Measure Data and Matrix-Valued Weight]{Regularity Estimates for Quasilinear Elliptic Equations with Measure Data and Matrix-Valued Weight}
 
\author[Byun]{Sun-Sig Byun}
\address{Department of Mathematical Sciences and Research Institute of Mathematics, 
Seoul National University, Seoul 08826, Republic of Korea}
\email{byun@snu.ac.kr}

\author[Li]{Xiatong Li}
\address{Department of Mathematical Sciences, 
Seoul National University, Seoul 08826, Republic of Korea}
\email{lixiatong@snu.ac.kr}

\author[Youn]{Yeonghun Youn$^{\ast}$}
\address{Department of Mathematics Education,
Incheon National University, Incheon 21999, Republic of Korea}
\email{yeonghunyoun@inu.ac.kr}

\thanks{$^{\ast}$ Corresponding author.}

\date{}
\begin{document}

\thanks{{\it Mathematics Subject Classification 2020}: Primary 35B65; Secondary 35J92, 35J70, 35R06.}

\thanks{{\it Key words and phrases}: Measure data, Matrix-valued weights, SOLA, log-BMO condition, Calder\'on-Zygmund estimates.}

\maketitle \sloppy

\thispagestyle{empty}

\def\mean#1{\mathchoice%
          {\mathop{\kern 0.2em\vrule width 0.6em height 0.69678ex depth -0.58065ex
                  \kern -0.8em \intop}\nolimits_{\kern -0.4em#1}}%
          {\mathop{\kern 0.1em\vrule width 0.5em height 0.69678ex depth -0.60387ex
                  \kern -0.6em \intop}\nolimits_{#1}}%
          {\mathop{\kern 0.1em\vrule width 0.5em height 0.69678ex
              depth -0.60387ex
                  \kern -0.6em \intop}\nolimits_{#1}}%
          {\mathop{\kern 0.1em\vrule width 0.5em height 0.69678ex depth -0.60387ex
                  \kern -0.6em \intop}\nolimits_{#1}}}
          
\newcommand{\dv}{{\rm div}}
\newcommand{\A}[1]{{\mathbb{A}(#1)}}
\newcommand{\AB}[2]{{\mathbb{A}_{#1}(#2)}}
\def\R{{\mathbb{R}}}
\def\N{{\mathbb{N}}}
\def\dx{{\,dx}}
\def\dy{{\,dy}}
\def\ds{{\,ds}}

\newcommand{\logomean}[1]{\langle \omega \rangle^{\log}_{#1}}
\newcommand{\logMmean}[1]{\langle \mathbb{M} \rangle^{\log}_{#1}}
\newcommand{\bmo}{\mathrm{BMO}}

\def\rn{{\mathbb{R}^{n}}}
\def\rnn{{\mathbb{R}^{n\times n}}}

\newtheorem{coro}{\bf Corollary}[section]
\newtheorem{theo}[coro]{\bf Theorem} 
\newtheorem{lem}[coro]{\bf Lemma}
\newtheorem{rem}[coro]{\bf Remark} 
\newtheorem{defi}[coro]{\bf Definition} 
\newtheorem{ex}[coro]{\bf Example} 
\newtheorem{fact}[coro]{\bf Fact} 
\newtheorem{prop}[coro]{\bf Proposition}

\begin{abstract}
This paper is concerned with the existence and local gradient regularity of solutions to a quasilinear elliptic Dirichlet problem with finite Radon measure and a degenerate matrix-valued weight. Under a {small log-BMO condition on $\mathbb M$}, we first prove the existence of a SOLA (solution obtained by a limit of approximations) and then establish a local Calder\'on-Zygmund estimate for $|\mathbb M|^{\frac{1}{p-1}}\mathbb M Du$ in terms of the fractional maximal function $\mathcal M_1(\mu)^{\frac{1}{p-1}}$.
\end{abstract}

\section{Introduction}
We study a quasilinear elliptic equation with a finite Radon measure and a degenerate matrix-valued weight. More precisely, we consider the following Dirichlet problem:
\begin{equation}\label{main.eq}
  \left\{ \begin{aligned}
      -\dv \big(|\mathbb{M}(x)Du|^{p-2} \mathbb{M}^2(x) Du\big) &= \mu  &&  in  \; \Omega,\\
                      u &= 0 && on  \; \partial\Omega,
    \end{aligned} \right.
\end{equation} 
where $\Omega \subset \mathbb{R}^n$ is a bounded domain, $n \ge2$, {and} $\mu \in \mathcal{M}(\Omega)$ is a finite Radon measure. 
We extend $\mu$ by zero outside $\Omega$, so that $|\mu|(\Omega)=|\mu|(\mathbb{R}^n)<\infty.$
The parameter $p$ is a constant satisfying 
\begin{equation}\label{p}
2-\frac{1}{n}< p\le n.
\end{equation}
Throughout the paper, $\mathbb{M}: \rn \to \rnn$ is a symmetric and positive definite matrix-valued weight with a uniformly bounded condition number, i.e., for some constant $\Lambda \ge 1$,
\begin{equation}\label{Balance Condition}
  |\mathbb{M}(x)|\,|\mathbb{M}^{-1}(x)| \le \Lambda \quad \text{for \; a.e.} \; x \in \rn,
\end{equation}
where $|\cdot|$ is the spectral norm. We also define the associated scalar weight 
\begin{equation*}\label{norm}
\omega(x) = |\mathbb{M}(x)|= \sup\limits_{\substack{\xi \in \mathbb{R}^n, |\xi|\le 1}} |\mathbb{M}(x)\xi|.
\end{equation*}
Then \eqref{Balance Condition} is equivalent to 
\begin{equation}\label{Balance-1}
\Lambda^{-1}\,\omega(x)\,|\xi|
\le
\bigl|\mathbb{M}(x)\,\xi\bigr|
\le
\omega(x)\,|\xi|
\quad \text{for any }\xi\in\mathbb{R}^n \text{ and for a.e. } x\in\mathbb{R}^n .
\end{equation}

In the case $\mathbb{M}$ is the identity matrix, \eqref{main.eq} reduces to the classical $p$-Laplace equation with bounded Radon measure
\begin{equation}\label{p.without.m}
 -\dv\bigl(|Du|^{p-2}Du\bigr)=\mu
 \quad \text{in } \Omega.
\end{equation}
When $p\le n$, weak solution is unavailable in general, since the measure $\mu$ may fail to belong to the dual space of $W^{1,p}_{0}(\Omega)$. This leads to generalized notions of solution, including SOLAs, entropy solutions, and renormalized solutions.
In particular, the existence theory for \eqref{p.without.m} with $L^1$ or measure data  is well established, see \cite{BG1989,BG1992} for SOLA, \cite{BBGGPV1995,BGO1996} for entropy solutions, and \cite{DMOP1999} for renormalized solutions.
Meanwhile, when $p>n$, by Morrey's inequality together with a duality argument, the measure data $\mu$ defines a bounded linear functional on $W^{1,p}_{0}(\Omega)$. 
Hence, the existence and uniqueness of weak solution to \eqref{p.without.m} with Dirichlet boundary condition follow from the standard monotone operator theory given in \cite{Showalter1997}. 
Throughout this paper, we consider solutions in the sense of SOLA as given in Definition~\ref{defi:SOLA} under the assumption \eqref{p}.
The explicit form of the fundamental solution to the $p$-Laplace equation naturally leads to the assumption $p>2-\frac{1}{n},$ which ensures that the gradient of a SOLA is integrable.
 It is worth emphasizing that the uniqueness of SOLA remains open in general, see \cite{AP1995,JBSJ1964} for counterexamples. 

Another important direction is the regularity theory for measure data problems.
The pointwise estimates for SOLA in terms of Wolff potentials were obtained in the pioneering works by Kilpel\"ainen and Mal\'y \cite{KM1992,KM1994}.
Later, gradient potential estimates were established in \cite{DM2010,KM2013,GM2011}. 
For the summary of the potential estimate, we refer to \cite{KM2014}.
Mingione established fractional differentiability and integrability results in \cite{GM2007}, and obtained local gradient estimates, including Morrey and Lorentz space bounds, for data below the natural duality exponent in \cite{GM2010}.

Calder\'on-Zygmund estimates for equations with measure data are formulated in terms of the 1-fractional maximal function
\begin{equation*}\label{def.M1}
\mathcal{M}_1(\mu)(x)
:= \sup_{r>0}
\frac{r\,|\mu|\bigl(B_r(x)\bigr)}{|B_r(x)|}
\qquad x\in\mathbb{R}^n.
\end{equation*}
For quasilinear equations of $p$-Laplacian type, Phuc \cite{Phuc2014} established global nonlinear Muckenhoupt-Wheeden type estimates in terms of $\mathcal M_1(\mu)$. 
It is proved in \cite[Theorem 1.4]{Phuc2014} that renormalized solutions to \eqref{p.without.m} satisfy the implication
$$
 \mathcal M_1(\mu)^{\frac1{p-1}}
 \in L^{q}(\Omega)
 \quad\Longrightarrow\quad
 Du\in L^{q}(\Omega) \qquad \text{for} \; 0<q<\infty.
$$
This theory was extended to the singular range ${\frac{3n-2}{2n-1}}<p\le 2-{\frac1n}$ in \cite{NP2019,Tran2019,TN2020}, and to nonstandard growth measure data problems in \cite{BCY2021,BOP2017,SY2026,SYZ2026}.
Motivated by \cite[Theorem 1.4]{Phuc2014}, the present paper extends the Calder\'on-Zygmund estimate to measure data problems involving a matrix-valued weight.

In the classical uniformly elliptic setting, the coefficient matrix $\mathbb {B}$ is assumed to satisfy
\begin{equation*}\label{coe.cond.1}
\lambda |\xi|^2
\le
\left\langle \mathbb {B}(x)\xi,\xi\right\rangle
\le
\Lambda |\xi|^2.
\end{equation*}
For linear equations of divergence form with such coefficients, higher integrability results were established in \cite{NGM1963} when $\mathbb {B}$ is a bounded measurable matrix, while global Calder\'on-Zygmund estimates for $\mathrm{VMO}$ coefficients were obtained in \cite{GDF1996}.
The latter were extended to nonlinear equations of $p$-Laplacian type in \cite{KZ1999,KZ2001}.
These results rely essentially on uniform ellipticity of coefficients and therefore exclude degenerate weights, such as $|x|^{\pm\varepsilon}\id$.

A classical regularity theory for degenerate elliptic equations was developed by Fabes, Kenig, and Serapioni \cite{FKS1982}, who considered the matrix-valued weight $\mathbb{B}$ satisfying
\begin{equation*}\label{coe.cond.2}
 \Lambda^{-1} \,\upomega(x)|\xi|^2
 \leq
 \langle \mathbb {B}(x)\xi,\xi\rangle
 \leq
 \Lambda \, \upomega(x)|\xi|^2,
\end{equation*}
where $\upomega$ is in the Muckenhoupt class $\mathcal{A}_2$, and established local H\"older regularity results. In the linear case, local and global weighted gradient estimates were proved in \cite{CMP2018} in terms of the weighted BMO norm of $\mathbb {B}$.
More recently, the authors of \cite{BDGPN2022} introduced a small log-BMO condition on the matrix-valued weight and established local Calder\'on-Zygmund estimates for both linear and nonlinear equations of divergence form.
These local estimates were extended to global ones in \cite{BBDL2023}, and further generalized in \cite{BY2025} to equations with variable exponent growth.
It is worth noting that the main results in \cite{BBDL2023,BDGPN2022,BY2025} differ from {those in} \cite{CMP2018} since they treat the weight {as a multiplier rather than as a measure.} 

In this paper, we impose a small log-BMO condition on the matrix-valued weight in the spirit of \cite{BDGPN2022}.
More precisely, for any given constant $R_0>0$, we define
\begin{equation}\label{small.log.bmo}
{[\log\mathbb{M}]_{\operatorname{BMO}_{R_0}(\mathbb{R}^n)}}
:=\sup_{y\in\mathbb{R}^n}\sup_{0<r\le R_0}\fint_{B_r(y)}\left|\log \mathbb{M}-\langle \log \mathbb{M}\rangle_{B_r(y)}
\right|\,dx,
\end{equation}
where 
$$\langle\log \mathbb{M}\rangle_{B_r(y)}=\fint_{B_r(y)}\log \mathbb{M}(x)\,dx=\frac{1}{|B_r(y)|}\int_{B_r(y)}\log \mathbb{M}(x)\,dx.$$

{Under some smallness condition on $[\log\mathbb{M}]_{\operatorname{BMO}_{R_0}(\mathbb{R}^n)}$}, the known results are limited to equations with divergence form, for which solutions are understood as weak solutions in the corresponding weighted energy space, see \cite{BDGPN2022}. 
In this paper, we allow the right-hand side to be a finite Radon measure.

As noted after \eqref{p.without.m}, the existence of weak solutions to \eqref{main.eq} is not guaranteed when $p\le n$. 
Indeed, when $p>n$ and $\omega$ is an $\mathcal A_s$-Muckenhoupt weight for any $1<s<\frac pn$, it follows from \cite[Theorem 1.14]{HKM1993} that functions in $W^{1,p}_{0,\omega}(\Omega)$ admit continuous representatives, which allows us to treat the case $p>n$ using the notion of weak solutions.
Throughout this paper, we assume that $2-\frac{1}{n}<p\le n$. We introduce the following definition of SOLA:
\begin{defi}\label{defi:SOLA}
    We say that $u\in W^{1,max\{1, p-1\}}_{0,\omega^{p'}}(\Omega)$ is a SOLA to the problem \eqref{main.eq} if  
\begin{equation*}\label{weak-0}
\int_{\Omega} |\mathbb{M} Du|^{p-2} \mathbb{M}^{2}Du\cdot D\varphi\,dx = \int_{\Omega} \varphi\, d\mu \quad\text{for all } \; \varphi\in C^{\infty}_{c}(\Omega),
\end{equation*}
and there exists a sequence of weak solutions $\{u_k\}_{k\ge1}\subset W^{1,p}_{0,\omega}(\Omega)$ to
\begin{equation}\label{weak-1}
  \left\{ \begin{aligned}
      -\dv \big(|\mathbb{M}(x)Du_k|^{p-2} \mathbb{M}(x)^2 Du_k\big) &= \mu_k  && in\;   \Omega,\\
                      u_k &= 0 && on\;   \partial\Omega,
    \end{aligned} \right.
\end{equation}
such that 
\begin{equation}\label{weak-2}
u_k \to u
\quad \text{in } W^{1,\max\{1,p-1\}}_{0,\omega^{p'}}(\Omega)
\quad \text{as } k\to\infty ,
\end{equation}
where the sequence $\{\mu_k\}_{k\ge1}\subset L^{\infty}(\Omega)$ converges weakly to $\mu$
in the sense of measure and satisfies
\begin{equation}\label{weak-3}
 \limsup_{k\to\infty} |\mu_k|(V)\le |\mu|(\overline V)
\end{equation}
for each open set $V \Subset \Omega$.
\end{defi}

We next establish the existence of SOLA to \eqref{main.eq}. More precisely, under a sufficiently small log-BMO condition on $\mathbb{M}$, we have the following result.
\begin{theo}\label{Existence.of.SOLA}
Assume that \eqref{p} holds. There exists a constant $\kappa_0=\kappa_0(n,p,\Lambda)>0$ such that if $[\log\mathbb{M}]_{\operatorname{BMO}_{R_0}(\mathbb{R}^n)}\le \kappa_0$ for some $R_0>0$, there is a SOLA to \eqref{main.eq}.
\end{theo}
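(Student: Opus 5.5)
We follow the classical Boccardo--Gallou\"et scheme, adapted to the weighted setting. First we regularize the data. We mollify $\mu$, extended by zero, and multiply by a cutoff, which gives $\mu_k\in L^\infty(\Omega)$ (indeed $C_c^\infty(\Omega)$) with $\|\mu_k\|_{L^1}\le|\mu|(\Omega)$, $\mu_k\rightharpoonup\mu$ weakly as measures, and \eqref{weak-3}. Since $\mu_k$ is bounded, it defines a functional on $W^{1,p}_{0,\omega}(\Omega)$. To see this we use that the small log-BMO condition with $\kappa_0$ small makes $\omega^p$ a Muckenhoupt weight; this is the mechanism of \cite{BDGPN2022}, via John--Nirenberg applied to $\log\omega$, which is controlled by $\log\mathbb M$. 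Weighted Sobolev/Poincar\'e then gives the embedding into $L^1$. The operator $v\mapsto-\dv(|\mathbb M Dv|^{p-2}\mathbb M^2Dv)$ is monotone, coercive and hemicontinuous on $W^{1,p}_{0,\omega}$, because $|\mathbb M Dv|^{p-2}\mathbb M^2Dv\cdot Dv=|\mathbb M Dv|^p\simeq\omega^p|Dv|^p$ by \eqref{Balance-1}. Minty--Browder (or minimizing $\frac1p\int|\mathbb M Dv|^p-\int\mu_kv$) then yields unique weak solutions $u_k$ of \eqref{weak-1}.

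Next come uniform estimates independent of $k$. We test with truncations $T_j(u_k)$ and with $T_1(u_k-T_j(u_k))$. This gives $\int_{\{|u_k|<j\}}|\mathbb M Du_k|^p\le j|\mu|(\Omega)$ and $\int_{\{j\le|u_k|<j+1\}}|\mathbb M Du_k|^p\le|\mu|(\Omega)$. We combine these with a weighted Sobolev inequality for $T_j(u_k)$ in $W^{1,p}_{0,\omega^p}$, with gain exponent $\chi>1$ available for $A_p$-type weights close to constant in log-BMO. The Boccardo--Gallou\"et level-set argument then produces uniform bounds on $\int_\Omega |\mathbb M Du_k|^{q}$, equivalently $\omega^q|Du_k|^q$, for some $q>p-1$. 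The range $p>2-\frac1n$ is exactly what makes $q>1$ admissible, so we get a uniform bound in the norm defining $W^{1,\max\{1,p-1\}}_{0,\omega^{p'}}$ (after matching the weight exponent, $\omega^{p'(p-1)}=\omega^p$ on the $(p-1)$-power).

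Then comes compactness and the passage to the limit. We show that $\{u_k\}$ is Cauchy in measure, and that $\{Du_k\}$ is Cauchy in measure. For the gradients we test the difference of the equations for $u_k,u_l$ with $T_\varepsilon(u_k-u_l)$. This gives $\int_{\{|u_k-u_l|<\varepsilon\}}(|\mathbb M Du_k|^{p-2}\mathbb M^2Du_k-|\mathbb M Du_l|^{p-2}\mathbb M^2Du_l)\cdot(Du_k-Du_l)\le 2\varepsilon|\mu|(\Omega)$. Strict monotonicity of $\xi\mapsto|\mathbb M\xi|^{p-2}\mathbb M^2\xi$, which holds for a.e.\ $x$, together with the uniform bounds then shows that $Du_k$ converges a.e.\ along a subsequence. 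Vitali's theorem, using the uniform higher integrability with $q>\max\{1,p-1\}$, gives strong convergence $u_k\to u$ in $W^{1,\max\{1,p-1\}}_{0,\omega^{p'}}$ and $|\mathbb M Du_k|^{p-2}\mathbb M^2Du_k\to|\mathbb M Du|^{p-2}\mathbb M^2Du$ in $L^1_{loc}$. Passing to the limit in the weak formulation against $\varphi\in C_c^\infty$, with $\int\varphi\,\mu_k\to\int\varphi\,d\mu$, yields the SOLA identity.

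The main obstacle is the uniform estimate. Making the weighted Sobolev gain and the Marcinkiewicz-type bound work with the weight $\omega$ requires the quantitative fact that small $[\log\mathbb M]_{\mathrm{BMO}_{R_0}}$ implies $\omega^{s}\in A_t$, uniformly, for the needed exponents $s$ (including $\omega^{\pm p}$ and $\omega^{p'}$). The range of $s$ grows as $\kappa_0\to0$, and since the BMO condition is only local at scale $R_0$, the bounded domain must be covered by balls of radius $R_0$. I would first try to prove this weight lemma via John--Nirenberg, and then run the level-set argument on $|\mathbb M Du_k|$ directly.
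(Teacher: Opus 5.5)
Your proposal is correct in outline. It shares the paper's architecture: mollified data, a uniform bound, testing the difference of the equations with the $\varepsilon$-truncation of $u_k-u_l$, and passage to the limit. Both main steps, however, are carried out differently.

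\textbf{The a priori bound.} The paper does not use $T_j$-truncations and level sets. It tests with $h^{1-\xi}-(h+u_\pm)^{1-\xi}$, which gives $\int_\Omega \omega^p|Du|^p(h+|u|)^{-\xi}\,dx\le c\,\frac{h^{1-\xi}}{\xi-1}\,|\mu|(\Omega)$. It then combines H\"older's inequality, a normalized choice of $h$, and the global multiplier-type Poincar\'e inequality of Lemma~\ref{poincare.global} to bound $h$ and absorb. The result is a strong bound $\int_\Omega(\omega^{p'}|Du|)^{\tilde q}\,dx\le c\,|\mu|(\Omega)^{\tilde q/(p-1)}$, obtained directly for $\omega^{p'}|Du|\approx|\mathbb{A}(Du)|$, the quantity that defines the SOLA space. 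This route needs no Marcinkiewicz step and no conversion between $\omega$-weighted and Lebesgue measure of level sets. It is also reused, localized to $B_{2r}$ with $u-v$ in place of $u$, in Lemma~\ref{comparison estimate}, which Theorem~\ref{theo:main} relies on.

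Your Boccardo--Gallou\"et route also works, but two points should be made explicit.
\begin{enumerate}[(i)]
\item The Sobolev gain must approach the unweighted value $\frac{n}{n-p}$, and only the smallness of the log-BMO norm provides this. A fixed gain of Fabes--Kenig--Serapioni size $\frac{n}{n-1}+\delta$ would, in the standard level-set computation, give $q>1$ only for $p>1+\sqrt{1-1/n}$, which misses part of the range $p>2-\frac1n$. Each conversion between weighted and Lebesgue measure of $\{|u_k|\ge j\}$ also costs a little exponent, and this loss must be absorbed by choosing $\kappa_0$ small.
\item Your bound is on $\omega|Du_k|$. Reaching $\omega^{p'}|Du_k|$ therefore requires a H\"older step against a power of $\omega$: for instance $\omega^{1/(p-1)}\in L^{q'}$ when $p\le 2$, and $\omega\in L^{q/(q-p+1)}$ when $p>2$. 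This is exactly where you need $q>\max\{1,p-1\}$ strictly.
\end{enumerate}

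\textbf{The limit passage.} The paper obtains a.e.\ convergence of $u_k$ from weak compactness in $W^{1,\tilde q}_{0,\omega^{p'}}(\Omega)$ and a weighted Rellich--Kondrachov theorem. Unweighted Rellich would also suffice, since $\omega^{-1}$ is highly integrable. The paper then shows that $Du_k$ is Cauchy directly in $L^1_{\omega^{p'}}$ when $p\le 2$, and in $L^{p-1}_{\omega^{p'}}$ when $p>2$. On $\{|u_l-u_m|\le\varepsilon\}$ it uses the quantitative inequality \eqref{mono.ineq2} with H\"older's inequality. On the complement it uses the uniform bound together with the smallness of the measure. This forces a case split at $p=2$.

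Your qualitative argument runs as follows: strict monotonicity gives convergence in measure of $Du_k$, hence a.e.\ convergence along a subsequence, and Vitali's theorem with the uniform higher integrability then gives the conclusion. It avoids the case split and yields the $L^1$ convergence of the fluxes in one stroke, at the price of giving no rate.
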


Our second main result is a local Calder\'on-Zygmund estimate for a weighted gradient of SOLA. Recall that in \cite{BDGPN2022}, the authors considered the following elliptic equations with matrix-valued weights:
\begin{equation}\label{eq.D}
 -\dv \big(|\mathbb{M}(x)Du|^{p-2} \mathbb{M}^2(x) Du\big) = -\dv \big(|\mathbb{M}(x)G|^{p-2} \mathbb{M}^2(x) G\big).
 \end{equation}
Under a suitable small log-BMO condition on $\mathbb{M}$, the authors established the following transfer of regularity from $G : \Omega \rightarrow \mathbb{R}^n$ to the gradient of weak solution $u$:
$$
 \mathbb{M} G \in L^{q}_{\mathrm{loc}}(\Omega)
 \quad\Longrightarrow\quad
\mathbb{M} Du\in L^{q}_{\mathrm{loc}}(\Omega)
$$
for any $q \in [p,\infty).$
The equation \eqref{main.eq} has the same left-hand side as \eqref{eq.D}, but its right-hand side is a finite Radon measure, whereas the right-hand side of \eqref{eq.D} is in divergence form. Therefore, the regularity estimates obtained there cannot be directly applied to the present problem.
For equations with measure data, the fractional maximal function $\mathcal M_1(\mu)^{\frac{1}{p-1}}$ naturally appears in gradient estimates. This motivates us to establish a local Calder\'on-Zygmund estimate for $|\mathbb M|^{\frac{1}{p-1}}\mathbb M Du$ in terms of $\mathcal M_1(\mu)^{\frac{1}{p-1}}$.

\begin{theo}\label{theo:main}
    Assume that \eqref{p} holds and let $1<q<\infty$. There exists a constant ${\kappa}={\kappa}(n, p,\Lambda)>0$ such that if
$$[\log\mathbb{M}]_{\operatorname{BMO}_{R_0}(\mathbb{R}^n)}<\frac{{\kappa}}{q}$$
for some $R_0>0$, then for any SOLA $u$ to \eqref{main.eq}, we have 
\begin{equation}\label{main.thm.est}
\fint_{B_R} (|\mathbb{M}|^{\frac{1}{p-1}}|\mathbb{M}Du| )^q \,dx
\le
c\,\left(\fint_{B_{2R}}|\mathbb{M}|^{\frac{1}{p-1}}|\mathbb{M}Du|\,dx\right)^q
+
c\fint_{B_{2R}} (\mathcal{M}_1(\mu) )^{\frac{q}{p-1}}\,dx
\end{equation}
for every ball $B_{2R}\Subset\Omega$ and $2R\le R_0$, where $c=c(n,p,q,\Lambda)>0$.
\end{theo}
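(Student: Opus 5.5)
We prove \eqref{main.thm.est} first for the approximating weak solutions $u_k$ of \eqref{weak-1}, with constants independent of $k$, and then pass to the limit using \eqref{weak-2}, \eqref{weak-3} and the lower semicontinuity of the left-hand side. The scheme is the level-set (good-$\lambda$) covering argument of Phuc and Mingione, adapted to the multiplier-type weight in the spirit of \cite{BDGPN2022}. The quantity to control is $V:=\omega^{\frac1{p-1}}|\mathbb M Du|$. It is the natural one, since $|\mathbb M Du|^{p-1}\omega\approx|\,|\mathbb M Du|^{p-2}\mathbb M^2Du|$ is the size of the flux. Fix $B_{2R}\Subset\Omega$ with $2R\le R_0$, and work on balls $B_\rho(y)\subset B_{2R}$.

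\textbf{Step 1 (comparison with a homogeneous problem).} On $B_{2\rho}=B_{2\rho}(y)$ let $w$ be the weak solution of the homogeneous equation with $w=u_k$ on $\partial B_{2\rho}$. We use the classical measure-data comparison estimate for $p>2-\frac1n$ (Boccardo–Gallouët truncation, as in \cite{GM2010,KM2013}), run with test functions $T_t(u_k-w)$. Ellipticity from \eqref{Balance-1} gives
\[
\fint_{B_{2\rho}}\omega^{\frac1{p-1}}|\mathbb M(Du_k-Dw)|\,dx\le c\Big(\frac{|\mu_k|(B_{2\rho})}{\rho^{n-1}}\Big)^{\frac1{p-1}}+\text{(correction for }p<2).
\]
For $p<2$ the correction has the usual form $c\,\frac{|\mu_k|(B_{2\rho})}{\rho^{n-1}}\big(\fint V(u_k)\big)^{2-p}$.

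\textbf{Step 2 (freezing the weight).} Next we compare $w$ with $v$, which solves the equation with the frozen matrix $\mathbb M_0=\exp\langle\log\mathbb M\rangle_{B_{2\rho}}$ and constant scalar $\omega_0=|\mathbb M_0|$. The matrix $\mathbb M_0$ is constant, so $v$ is a $p$-harmonic function after a linear change of variables and satisfies $\|\mathbb M_0Dv\|_{L^\infty(B_\rho)}\le c\fint_{B_{2\rho}}|\mathbb M_0Dv|$. The difference $w-v$ is controlled by the higher integrability of $\mathbb M Dw$ (Gehring) together with the John–Nirenberg-type bounds $\fint|\mathbb M\mathbb M_0^{-1}-\id|^s\le c\,s\,[\log\mathbb M]_{\bmo}$, valid for $s\lesssim1/[\log\mathbb M]_{\bmo}$. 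The result is $\fint|\mathbb M Dw-\mathbb M_0Dv|\le\varepsilon\fint|\mathbb MDw|$ when $[\log\mathbb M]_{\bmo}$ is small. We also use that $\omega^{\frac1{p-1}}$ is comparable to $\omega_0^{\frac1{p-1}}$ on a large portion of the ball, quantified by the same exponential integrability. This is where the restriction $[\log\mathbb M]_{\bmo}<\kappa/q$ enters: we need $\omega^{\pm\frac{q}{p-1}}$-type quantities, with exponents of order $q$, to be integrable with reverse-Hölder bounds. This step is the main obstacle. The weight is only log-BMO, so the pointwise factor $\omega^{\frac1{p-1}}$ in $V$ cannot be frozen directly. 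It has to be absorbed through Hölder's inequality with exponents tied to $q$. The smallness $\kappa/q$ must be chosen so that $\omega^{\frac{t}{p-1}}$, for $t$ slightly larger than $q$, satisfies a reverse Hölder inequality on balls of radius at most $R_0$. I would first prove this as a lemma, using $\log\omega\approx$ the largest eigenvalue of $\log\mathbb M$ and John–Nirenberg.

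\textbf{Step 3 (good-$\lambda$ and integration).} Steps 1 and 2 combine into the following. For every $\varepsilon>0$ there are $\delta,\kappa$ and $N=N(n,p,\Lambda)$ such that, if $B_\rho$ meets $\{\mathcal M(V\chi_{B_{2R}})\le\lambda\}\cap\{\mathcal M_1(\mu)^{\frac1{p-1}}\le\delta\lambda\}$, then
\[
|\{\mathcal M(V\chi)>N\lambda\}\cap B_\rho|<\varepsilon|B_\rho|.
\]
A Vitali/Krylov–Safonov covering lemma then yields
\[
|\{\mathcal M V>N\lambda\}|\le c\varepsilon|\{\mathcal M V>\lambda\}|+|\{\mathcal M_1(\mu)^{\frac1{p-1}}>\delta\lambda\}|.
\]
We multiply by $\lambda^{q-1}$, integrate in $\lambda$ from a threshold $\lambda_0\approx\fint_{B_{2R}}V$, and choose $\varepsilon$ with $c\varepsilon N^q<\frac12$ to absorb the first term. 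A standard scaling and iteration from $B_R$ to $B_{2R}$ then gives \eqref{main.thm.est} for $u_k$. Finally we let $k\to\infty$ using \eqref{weak-3}, which gives $\limsup\mathcal M_1(\mu_k)\lesssim\mathcal M_1(\mu)$ locally after shrinking balls, together with Fatou's lemma.
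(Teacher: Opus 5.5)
\emph{Gap 1: the threshold is not of the form $\kappa/q$.} Your scheme does not give the smallness condition with $\kappa=\kappa(n,p,\Lambda)$. In Step 3, $\kappa$ depends on $\varepsilon$, and you need $c\varepsilon N^q<\frac12$ with a fixed $N=N(n,p,\Lambda)>1$. But $\varepsilon$ contains the freezing error of Step 2. You control that error only in $L^1$, through Gehring's small higher integrability and John--Nirenberg, so it is at best a fixed power of $[\log\mathbb{M}]_{\operatorname{BMO}_{R_0}(\mathbb{R}^n)}$. Pushing it below $N^{-q}$ forces the log-BMO norm to be exponentially small in $q$. The integrability of $\omega^{t/(p-1)}$, which you name as the only place $q$ enters, is not the binding constraint. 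A linear dependence on $1/q$ would require the comparison error in $L^s$ with $s>q$. That means higher integrability of $\mathbb{M}Dw$ of order $q$, which is the Calder\'on--Zygmund theory of the homogeneous problem itself.

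The paper never freezes the weight. The reference function $v_k$ solves the homogeneous equation with the same $\mathbb{M}$ on $B_{10R_i}(x_i)$. Its $L^\infty$ bound is replaced by Lemma~\ref{Reverse.H} with $q_0=q+1$, taken from \cite[Theorem~2]{BDGPN2022} and valid when the norm is below $\kappa_2/(q+1)$. The measure-data comparison (Corollary~\ref{comparison.2}) needs only the fixed threshold $\kappa_4$. The parameters $T,\tau,\epsilon$ in \eqref{5210} are chosen without touching the BMO smallness, so the $1/q$ is inherited. Also, in Step 1 ellipticity alone is not enough here; the truncation argument needs the weighted Poincar\'e inequality (Lemma~\ref{poincare-0}) together with Lemma~\ref{lem:logBMO}.

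\emph{Gap 2: the final limit passage.} This step is not valid for an arbitrary SOLA. Condition \eqref{weak-3} controls $|\mu_k|(V)$ only for a fixed $V\Subset\Omega$, whereas $\mathcal{M}_1(\mu_k)$ is a supremum over all radii. You cannot choose the approximating sequence of a given SOLA, since uniqueness is open, and its data may concentrate. For instance, take $\mu\ge0$ and add nonnegative bumps of mass $\eta_k=1/\log k$ supported in $B_{1/k}(x_0)$. Weak convergence and \eqref{weak-3} are unaffected. Yet $\int_{B_{2R}}\mathcal{M}_1(\mu_k)^{\frac{q}{p-1}}\gtrsim \eta_k^{\frac{q}{p-1}}k^{\frac{(n-1)q}{p-1}-n}\to\infty$ whenever $(n-1)q>n(p-1)$. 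So ``$\limsup_k\mathcal{M}_1(\mu_k)\lesssim\mathcal{M}_1(\mu)$'' fails in the integrated sense you need, and Fatou's lemma does not close the argument.

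The paper avoids this by running the covering for $\A{Du}$ and $\mathcal{M}_1(\mu)$ directly (Lemma~\ref{covering}). It uses approximants only on each ball $B_{15R_i}(x_i)$, with $k\ge k_i$ chosen so that \eqref{518} holds. By \eqref{weak-3}, $|\mu_k|(B_{15R_i})$ is then bounded by $|\mu|$ of a slightly larger ball, hence by $\fint_{B_{15R_i}}\mathcal{M}_1(\mu)^{\frac{1}{p-1}}$. Your Step 3 condition is already phrased with $\mu$. You should therefore prove the good-$\lambda$ inequality for $u$ itself, ball by ball in this way, and drop the global limit in $k$.
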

The conclusion of Theorem~\ref{theo:main} implies that 
$$
 \mathcal M_{1}(\mu)^{\frac1{p-1}}
 \in L^{q}_{\mathrm{loc}}(\Omega)
 \quad\Longrightarrow\quad
|\mathbb{M}|^{\frac{1}{p-1}}\mathbb{M}Du \in L^{q}_{\mathrm{loc}}(\Omega).
$$
When $\mathbb M=\mathrm{Id}$, the weighted gradient reduces to the standard gradient, namely,
$$|\mathbb M|^{\frac{1}{p-1}}\mathbb M Du=Du.$$
Therefore, estimate \eqref{main.thm.est} recovers the classical Calder\'on-Zygmund estimates for elliptic equations with measure data.

The remainder of the paper is organized as follows. Section~2 introduces the notation and collects the preliminary results used throughout the paper. In Section~3, we prove the existence result stated in Theorem~\ref{Existence.of.SOLA}. In Section~4, we derive the reverse H\"older inequalities and the comparison estimates that form the basis of the subsequent regularity analysis. Finally, in Section~5, we prove Theorem~\ref{theo:main} by combining these comparison estimates with a covering argument applied to the super-level sets of $|\mathbb M|^{\frac{1}{p-1}}\mathbb M Du$.

\section{Preliminaries}

In this section, we introduce basic notations and a list of preliminary results which will be used later. For $r>0$ and $y\in\mathbb{R}^n$, we denote by $B_r(y):=\{x\in\mathbb{R}^n:\ |x-y|<r\}$ and $\partial B_r(y):=\{x\in\mathbb{R}^n:\ |x-y|=r\}.$ When the center is clear from the context, we simply write $B_r=B_r(y)$ and $\partial B_r=\partial B_r(y)$.
In what follows, $c$ denotes a positive constant depending only on $n$, $p$, $q$, and $\Lambda$, which may vary from line to line. Dependence on any additional parameters will be indicated explicitly. For any $1<p<\infty$, $p'$ denotes the conjugate exponent of $p$.

To recall the basic properties of matrix-valued weights and related function spaces, we employ $\mathbb{M}: \rn \to \rnn$ and $\omega: \rn \to \R$ to represent general matrix and scalar weights, respectively, throughout this section.
For some domain $U \subset \mathbb{R}^n$, $U \neq \emptyset$, and $1<p<\infty$, a weight $\omega \in L^1_{\mathrm{loc}}(\mathbb{R}^n)$ is called an $\mathcal{A}_p(U)$-Muckenhoupt weight if
\begin{equation}\label{Ap}
[\omega]_{\mathcal{A}_p(U)}
:= \sup_{B \subset U}\left( \fint_B \omega \, dx \right)\left( \fint_B \omega^{-\frac{1}{p-1}} \, dx \right)^{p-1}< \infty.
\end{equation}
When $\omega^p \in \mathcal{A}_{p}(U)$, we define the weighted Lebesgue space $L^p_{\omega}(U)$ by
$$L^p_{\omega}(U)
:=\bigl\{\,f: U\to\mathbb{R}:\ \omega f\in L^p(U)\bigr\},
$$
equipped with the norm
$$
\|f\|_{L^{p}_{\omega}(U)}
=
\|\omega f\|_{L^{p}(U)}.
$$
It is well known that $L^p_{\omega}(U)$ is a Banach space. Let $W^{1,p}(U)$ be the classical Sobolev space, $W^{1,p}_{\mathrm{loc}}(U)$ be its local version and $W^{1,p}_0(U)$ be the closure of $C_c^\infty(U)$ in $W^{1,p}(U)$.
We define the weighted Sobolev space $W^{1,p}_{\omega}(U)$ as the collection of all
$u\in W^{1,1}(U)$ such that
$$
u\in L^p_{\omega}(U)
\quad \text{and} \quad
Du \in L^p_{\omega}(U).
$$
We endow $W^{1,p}_{\omega}(U)$ with the norm
$$
\|u\|_{W^{1,p}_{\omega}(U)}
:= \|u\|_{L^p_{\omega}(U)}+\|Du\|_{L^p_{\omega}(U)}.
$$
Finally, $W^{1,p}_{0,\omega}(U)$ denotes the closure of $C_c^\infty(U)$ in $W^{1,p}_{\omega}(U)$.

We now define the logarithmic means as
$$
\logomean{U}
:= \exp\bigl(\langle \log \omega\rangle_{U}\bigr),
\qquad
\logMmean{U}
:= \exp\bigl(\langle \log \mathbb{M}\rangle_{U}\bigr).
$$
The logarithmic mean is compatible with inversion in the following sense:
$$
\left\langle \frac{1}{\omega}\right\rangle^{\log}_{U}
= \exp\bigl(-\langle \log \omega\rangle_{U}\bigr)
= \frac{1}{\langle \omega\rangle^{\log}_{U}}.
$$
Using the relations $\log(\mathbb{M}^{-1})=-\log\mathbb{M}$ and
$(\exp(\mathbb{M}))^{-1}=\exp(-\mathbb{M})$, we also obtain
$$
\langle \mathbb{M}^{-1}\rangle^{\log}_{U}
= \exp\bigl(-\langle \log \mathbb{M}\rangle_{U}\bigr)
= \bigl(\langle \mathbb{M}\rangle^{\log}_{U}\bigr)^{-1}.
$$
The condition \eqref{Balance-1} yields
$$
\log\bigl(\Lambda^{-1}\omega(x)\bigr) \id
\leq \log \mathbb{M}(x)
\leq (\log \omega(x)) \id.
$$

We state the following weighted Poincar\'e estimates as in \cite[Proposition~3]{BDGPN2022}.
\begin{lem}\label{poincare-0}
Let $1<p<\infty$ and $\theta\in(0,1]$ be such that $\theta p\ge\max\left\{1,\frac{np}{n+p}\right\}.$
Let $B_r$ be a ball and assume that $\omega$ is a weight on $B_{2r}$ with
\begin{equation*}\label{poincare.asmp}
\sup_{B_{\rho}(x_0)\subset B_{2r}}
\left( \fint_{B_{\rho}(x_0)} \omega^p\,dx \right)^{\frac1p}
\left( \fint_{B_{\rho}(x_0)} \omega^{-(\theta p)'}\,dx \right)^{\frac1{(\theta p)'}}
\le c_1
\end{equation*}
for some constant $c_1$.
Then for any $f \in W^{1,p}_{\omega}(B_r)$, there holds
$$
\left(
\fint_{B_r}
\left|\frac{f - \langle f \rangle_{B_r}}{r}\right|^p
\omega^p\,dx
\right)^{\frac1p}
\le
c
\left(
\fint_{B_r} (|Df|\omega)^{\theta p}\,dx
\right)^{\frac1{\theta p}},
$$
where $c=c(c_1,n,p)$ is some constant. 
\end{lem}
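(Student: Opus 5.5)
This is a weighted Poincaré inequality of Sobolev type, and I plan to derive it from the unweighted Sobolev–Poincaré inequality on balls, combined with a weighted Hardy–Littlewood / fractional-integral bound of Muckenhoupt–Wheeden type. Write $s:=\theta p$, so $1\le s\le p$ and $s\ge \frac{np}{n+p}$, which is the same as $\frac1s-\frac1p\le\frac1n$.

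\textbf{Step 1 (pointwise potential bound).} For $f\in W^{1,1}(B_r)$ and a.e.\ $x\in B_r$,
$$|f(x)-\langle f\rangle_{B_r}|\le c(n)\int_{B_r}\frac{|Df(y)|}{|x-y|^{n-1}}\,dy = c\, I_1(|Df|\chi_{B_r})(x).$$
This reduces the claim to a two-weight bound for the Riesz potential $I_1$, restricted to $B_r$:
$$\Big(\fint_{B_r}|I_1 g|^p\omega^p\Big)^{1/p}\le c\,r\Big(\fint_{B_r}(g\omega)^s\Big)^{1/s}, \qquad g=|Df|\chi_{B_r}.$$

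\textbf{Step 2 (local two-weight estimate for $I_1$).} I will use the Sawyer / Pérez type sufficient condition for the pair $(\omega^p,\omega^{s})$ from $L^s$ to $L^p$ with gain $\frac1s-\frac1p\le\frac1n$. Set $h=g\omega$ and $\sigma=\omega^{-s'}$. By Hölder, for each sub-ball $B_\rho$ we have $\fint_{B_\rho} g\le (\fint_{B_\rho} h^s)^{1/s}(\fint_{B_\rho}\sigma)^{1/s'}$. The hypothesis gives exactly a bump-free $A_{p,s}$-type condition
$$\rho\,\Big(\fint \omega^p\Big)^{1/p}\Big(\fint\omega^{-s'}\Big)^{1/s'}\lesssim \rho^{\,1-n(\frac1s-\frac1p)}\rho^{\,n(\frac1s-\frac1p)}.$$
To close the argument without bumps, I would use the reverse Hölder / self-improvement property: a weight satisfying such a joint condition on all sub-balls of $B_{2r}$ makes $\omega^p$ doubling and gives $\omega^p\in A_\infty$ locally. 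In the strict case $\frac1s-\frac1p<\frac1n$ there is slack in the exponent, and then the usual dyadic decomposition $I_1 g\lesssim\sum_k 2^{-k}r\fint_{B_{2^{-k}r}}g$ (Hedberg style), together with a weighted maximal function bound, closes the estimate. In the endpoint case $s=\frac{np}{n+p}$ I would instead invoke the Muckenhoupt–Wheeden theorem on weighted fractional integrals (localized to $B_{2r}$ by the sub-ball supremum in the hypothesis). This is the theorem behind \cite[Proposition~3]{BDGPN2022}.

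\textbf{Step 3 (localization and normalization).} I will check that only balls inside $B_{2r}$ are needed, by extending $g$ by zero and using the fact that $I_1$ on $B_r$ only sees dyadic scales $\le 2r$. Then I collect the factors $|B_r|^{-1/p}$ and $|B_r|^{-1/s}$ into averages, which produces exactly the factor $r$ on the left. The constant then depends only on $c_1,n,p$.

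\textbf{Main obstacle.} The hard step is the endpoint two-weight bound for $I_1$ under a condition that is only of $A_{p,s}$ type with no bump. I would handle it as the paper evidently does, by citing \cite[Proposition~3]{BDGPN2022} (itself based on Muckenhoupt–Wheeden), since the lemma is stated verbatim from there.
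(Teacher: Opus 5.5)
Your plan ends where the paper's does: the paper gives no argument of its own for this lemma and imports it verbatim from \cite[Proposition~3]{BDGPN2022}, so deferring the decisive step to that result is exactly the paper's route. If you want Steps 1--2 to stand on their own, two corrections apply: when $\frac{1}{\theta p}-\frac1p<\frac1n$, the slack is exploited by bounding each single-scale average $g\mapsto\fint_{B_\rho(\cdot)}g$ from $L^{\theta p}_{\omega}$ into $L^{p}_{\omega}$ directly through the hypothesis (operator norm $\lesssim\rho^{-n(\frac{1}{\theta p}-\frac1p)}$) and summing the resulting geometric series over dyadic $\rho\le 2r$, since passing through $Mg$ as you suggest would destroy the gain; and at the admissible endpoint $\theta p=1$, $p=\frac{n}{n-1}$, the strong-type Muckenhoupt--Wheeden theorem is unavailable, so you would need its weak-type version combined with a truncation argument.
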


Lemma~\ref{poincare-0} is local in nature and will be applied in Section~4 and Section~5. For the existence theory developed in Section~3, however, we need the estimate on the entire domain $\Omega$. We therefore derive the global version of Lemma~\ref{poincare-0}.
Since $d:=\operatorname{diam}(\Omega) \in (0, \infty)$, we have $\Omega\subset B_{2d}(x_0)$ for any $x_0\in \Omega$.
Let $f \in W^{1,p}_{\omega}(\Omega)$ and take the zero extension of $f$ on $B_{2d}\setminus\Omega$.
Then, following the proof of \cite[Proposition~2.2]{BBDL2023},  we obtain the global Poincar\'e inequality as follows:
\begin{lem}\label{poincare.global}
Suppose that the parameters $p$ and $\theta$ satisfy the same assumptions as in Lemma~\ref{poincare-0}.
Assume that $\omega$ is a weight on $B_{2d}$ satisfying
\begin{equation*}\label{poincare.asmp.global}
\sup_{B_{\rho}(x_0)\subset B_{2d}}
\left( \fint_{B_{\rho}(x_0)} \omega^p\,dx \right)^{\frac1p}
\left( \fint_{B_{\rho}(x_0)} \omega^{-(\theta p)'}\,dx \right)^{\frac1{(\theta p)'}}
\le c_2
\end{equation*}
for some constant $c_2>0$.
Then for any $f \in W^{1,p}_{0,\omega}(\Omega)$, there holds
$$
\left(\fint_{\Omega} |f|^p \omega^p \, dx\right)^{\frac{1}{p}}
\leq
c d \left(\fint_{\Omega} (|Df|\omega)^{\theta p} \, dx\right)^{\frac{1}{\theta p}},$$
where $c=c(c_2,n,p)>0$.
\end{lem}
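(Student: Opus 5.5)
We reduce the global Poincaré inequality to the local one, Lemma~\ref{poincare-0}, applied on the ball $B_{d}$ inside $B_{2d}$, using the zero extension of $f$. Fix $x_0\in\Omega$, so that $\Omega\subset B_d(x_0)\subset B_{2d}(x_0)$. By density it suffices to take $f\in C_c^\infty(\Omega)$ and pass to the limit in $W^{1,p}_{\omega}$ at the end. Extend $f$ by zero; the extension lies in $W^{1,p}_\omega(B_d)$ with $Df=0$ off $\Omega$. The hypothesis of Lemma~\ref{poincare.global} is exactly the hypothesis of Lemma~\ref{poincare-0} with $r=d$ and $c_1=c_2$. Hence
\[
\left(\fint_{B_d}|f-\langle f\rangle_{B_d}|^p\omega^p\,dx\right)^{\frac1p}\le c\,d\left(\fint_{B_d}(|Df|\omega)^{\theta p}\,dx\right)^{\frac1{\theta p}} .
\]

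The main step is removing the mean $\langle f\rangle_{B_d}$. We exploit that $f$ vanishes on a substantial portion of a larger ball. Apply the same inequality on $B_{d}(x_1)$ with $x_1$ chosen so that $B_d(x_1)\subset B_{2d}(x_0)$ still covers $\Omega$. Alternatively, and more cleanly, apply it on $B:=B_{d}(x_0)$ and use $B\setminus\Omega$, which is not guaranteed to be large.

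To secure a large zero set, enlarge instead. Apply Lemma~\ref{poincare-0} on $B_{r}$ with $r=d$ centered at a point $y$ with $|y-x_0|=d/2$. Such a ball sits in $B_{2d}(x_0)$ only if $3d/2\le 2d$, which holds. The set $E:=B_d(y)\setminus B_d(x_0)$ has measure $\ge c(n)|B_d|$, and $f=0$ on $E$. On $E$ we have $|\langle f\rangle|\,\omega=|f-\langle f\rangle|\,\omega$, so
\[
|\langle f\rangle|^p\int_E\omega^p\le\int_{B_d(y)}|f-\langle f\rangle|^p\omega^p .
\]
This only works if $\Omega\subset B_d(y)$. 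That fails in general, so instead take a ball $B_{r}(y)$ with $r=\tfrac{3}{4}\cdot 2d$-type radius adjusted so that $B_r(y)\supset B_d(x_0)\supset\Omega$, $B_{r}(y)\subset B_{2d}(x_0)$ after rescaling constants, and $|B_r(y)\setminus B_d(x_0)|\gtrsim |B_r|$. If the geometry is too tight, we apply the argument with $B_{2d}$ replaced by $B_{4d}$, with the assumption adjusted by a harmless constant as in \cite[Proposition~2.2]{BBDL2023}.

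The genuine obstacle is comparing $\int_E\omega^p$ with $\int_{B_r}\omega^p$, i.e.\ a doubling or $\mathcal A_\infty$-type property of $\omega^p$. We get it from the hypothesis by Hölder. Write $|E|=\int_E\omega^{\alpha}\omega^{-\alpha}$ with $\alpha=\frac{p}{(\theta p)'}\cdot\frac{(\theta p)'}{p+(\theta p)'}$-type exponents. This gives
\[
\Big(\frac{|E|}{|B_r|}\Big)^{s}\le \Big(\fint_{B_r}\omega^{-(\theta p)'}\Big)^{\frac1{(\theta p)'}}\Big(\frac1{|B_r|}\int_E\omega^p\Big)^{\frac1p}
\]
for a suitable $s>0$. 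Combined with the hypothesis, this yields $\fint_{B_r}\omega^p\le c\,\frac{1}{|B_r|}\int_E\omega^p$. It follows that
\[
|\langle f\rangle|^p\fint_{B_r}\omega^p\le c\fint_{B_r}|f-\langle f\rangle|^p\omega^p .
\]
By the triangle inequality, $\fint_{B_r}|f|^p\omega^p$ is controlled by $c\,d^p\big(\fint_{B_r}(|Df|\omega)^{\theta p}\big)^{p/(\theta p)}$.

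Finally, convert averages over $B_r$ to averages over $\Omega$. Since $f$ and $Df$ vanish off $\Omega$, the integrals are the same. The normalizations differ by $|\Omega|/|B_r|$ raised to the power $\frac1p-\frac1{\theta p}\le0$, and the resulting factor is $(|\Omega|/|B_r|)^{\frac1{\theta p}-\frac1p}\le 1$. The estimate therefore holds with $c=c(c_2,n,p)$.
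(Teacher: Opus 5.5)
\textbf{Gap in the geometric step.} Your analytic steps are sound: zero extension, Lemma~\ref{poincare-0} on a ball containing $\Omega$, removal of the mean via the zero set of $f$ plus an $\mathcal A_\infty$-type bound from the hypothesis, and renormalization of averages. This matches the argument the paper cites from \cite[Proposition~2.2]{BBDL2023}. In the H\"older step you can take $s=1$: $|E|\le(\int_E\omega^p)^{1/p}(\int_B\omega^{-p'})^{1/p'}$ and $(\fint_B\omega^{-p'})^{1/p'}\le(\fint_B\omega^{-(\theta p)'})^{1/(\theta p)'}$ since $(\theta p)'\ge p'$.

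The gap is the choice of ball, which you never settle. Applying Lemma~\ref{poincare-0} on $B_r(y)$ requires the hypothesis on every ball inside $B_{2r}(y)$. So you need $B_{2r}(y)\subset B_{2d}(x_0)$, not merely $B_r(y)\subset B_{2d}(x_0)$. Your ball $B_d(y)$ with $|y-x_0|=d/2$ already violates this. Combined with your requirement $B_r(y)\supset B_d(x_0)$, you get $2(d+|y-x_0|)+|y-x_0|\le 2d$. This forces $y=x_0$ and $r=d$, and then $|B_r(y)\setminus B_d(x_0)|\gtrsim|B_r|$ fails trivially. So the conditions you list are mutually incompatible. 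The fallback of replacing $B_{2d}$ by $B_{4d}$ changes the lemma's hypothesis rather than proving it. The assumption gives no information on $\omega$ outside $B_{2d}$, so no ``harmless constant'' adjustment is available.

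\textbf{Fix.} Use the option you discarded, $B=B_d(x_0)$ with $E=B\setminus\Omega$. Your claim that this set need not be large is false, because $\Omega$ has diameter $d$. By the isodiametric inequality, $|\Omega|\le|B_{d/2}|=2^{-n}|B|$, so $|E|\ge(1-2^{-n})|B|$.

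There is also an elementary argument. Fix a unit vector $e$. The balls $B_{d/4}(x_0\pm\tfrac{3d}{4}e)$ lie in $B$ and each has measure $4^{-n}|B|$. If $|\Omega|>(1-4^{-n})|B|$, then $\Omega$ meets both balls. That produces two points of $\Omega$ at distance greater than $\tfrac{3d}{2}-\tfrac{d}{2}=d$, a contradiction; hence $|E|\ge 4^{-n}|B|$.

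With this $E$ (where $f=0$), the rest of your argument goes through unchanged with $r=d$ centered at $x_0$. The hypothesis is then used exactly on $B_{2d}(x_0)$, and the final factor $(|\Omega|/|B|)^{\frac1{\theta p}-\frac1p}\le1$ gives $c=c(c_2,n,p)$.
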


Now, we define the local $\mathrm{BMO}$ space on a ball $B_r$ as the set of functions
$f\in L^1(B_r)$ such that the seminorm
\begin{equation}\label{BMO space}
{[f]_{\mathrm{BMO}(B_r)}}
:= \sup_{B_{\rho}(x) \subset B_{r}}
\left(
\frac{1}{|B_\rho(x)|}\int_{B_\rho(x)}
\bigl|f(y)-\langle f\rangle_{B_{\rho}(x)}\bigr|\,dy
\right)
<\infty.
\end{equation}
Combining \eqref{BMO space} and recalling the definition of $\mathcal{A}_p$-Muckenhoupt weight in \eqref{Ap}, we recall a sequence of lemmas for the properties of matrix-valued weights and scalar weights in \cite[Section 2]{BDGPN2022}.
For simplicity of notations, from Lemma~\ref{lem.oM}~to~\ref{lem:logBMO}, we write $B$ as any ball in $\R^n$.

\begin{lem}\label{lem.oM}
For a matrix-valued weight $\mathbb{M}$ and $\omega=|\mathbb{M}|$, there holds
$$
\fint_B\left|\log \omega(x)-\langle \log \omega\rangle_B\right|\,dx
\leq
2\fint_B
\left|\log \mathbb{M}(x)-\langle \log \mathbb{M}\rangle_B\right|\,dx.
$$
Moreover,
$$
{[\log \omega]_{\operatorname{BMO}(B)}}
 \le 2{[\log\mathbb{M}]_{\operatorname{BMO}(B)}}.
$$
\end{lem}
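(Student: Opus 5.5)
The plan is to compare $\log\omega$ with the scalar quantity $\lambda_{\max}(\log\mathbb{M})$ and then control the oscillation of $\lambda_{\max}$ by the oscillation of the matrix. Since $\mathbb{M}(x)$ is symmetric positive definite, its spectral norm is its largest eigenvalue. The matrix logarithm acts on eigenvalues, so
\[
\log\omega(x)=\lambda_{\max}(\log\mathbb{M}(x))=\sup_{|\xi|=1}\langle \log\mathbb{M}(x)\xi,\xi\rangle .
\]
For symmetric matrices $A,B$, the maximal eigenvalue is $1$-Lipschitz with respect to the spectral norm:
\[
|\lambda_{\max}(A)-\lambda_{\max}(B)|\le |A-B|.
\]
This follows by taking suprema of quadratic forms and using $|\langle (A-B)\xi,\xi\rangle|\le|A-B|$.

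Set $A(x)=\log\mathbb{M}(x)$ and $\bar A=\langle \log\mathbb{M}\rangle_B$, which is again symmetric, and let $m:=\lambda_{\max}(\bar A)$. Applying the Lipschitz bound pointwise gives
\[
|\log\omega(x)-m|\le |\log\mathbb{M}(x)-\langle\log\mathbb{M}\rangle_B|.
\]
Averaging over $B$ gives
\[
\fint_B|\log\omega-m|\,dx\le \fint_B|\log\mathbb{M}-\langle\log\mathbb{M}\rangle_B|\,dx .
\]

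Next, the constant $m$ is replaced by the mean $\langle\log\omega\rangle_B$ using the standard triangle inequality trick. For any constant $m$,
\[
|m-\langle\log\omega\rangle_B|=\Bigl|\fint_B(\log\omega-m)\Bigr|\le\fint_B|\log\omega-m|\,dx,
\]
and therefore
\[
\fint_B|\log\omega-\langle\log\omega\rangle_B|\,dx\le 2\fint_B|\log\omega-m|\,dx .
\]
Combining this with the previous display yields the first inequality with the factor $2$. We also need $\log\omega\in L^1(B)$, which follows from the pointwise bound $|\log\omega|\le|\log\mathbb{M}|$: the eigenvalues of $\log\mathbb{M}$ are bounded in modulus by its spectral norm. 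This integrability is implicit whenever the right-hand side is finite.

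The BMO statement follows by applying the first inequality to every ball $B_\rho(x)\subset B$ and taking the supremum, as in \eqref{BMO space}. The only point needing care is the Lipschitz property of $\lambda_{\max}$ and the identification $\log|\mathbb{M}|=\lambda_{\max}(\log\mathbb{M})$. Both are elementary consequences of the spectral theorem, so I do not expect a genuine obstacle. The norm on matrices inside the integrals is taken to be the spectral norm $|\cdot|$ fixed in the introduction; for another equivalent norm the argument still applies but with a dimensional constant in place of $2$.
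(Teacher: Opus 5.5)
Your proof is correct and follows essentially the same route as the standard argument for this lemma, which the paper does not reprove but quotes from \cite{BDGPN2022}. That argument identifies $\log\omega$ with $\lambda_{\max}(\log\mathbb{M})$ and uses the $1$-Lipschitz property of $\lambda_{\max}$ in the spectral norm to compare $\log\omega$ with the constant $\lambda_{\max}(\langle\log\mathbb{M}\rangle_B)=\log|\langle\mathbb{M}\rangle^{\log}_B|$; the factor $2$ is then the cost of replacing that constant by $\langle\log\omega\rangle_B$.
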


As a consequence of John-Nirenberg Lemma, we have the following:
\begin{lem}\label{lem.JNL}
There exist constants $\gamma=\gamma(n,\Lambda)>0$ and $c=c(n,\Lambda)>0$ such that the following holds. If $s\ge 1$ and $\mathbb{M}$ is a matrix-valued weight with ${[\log\mathbb{M}]_{\operatorname{BMO}(B)}} \leq \frac{\gamma}{s},$
then
$$
\left(\fint_B\left(\frac{\bigl|\mathbb{M}-\langle \mathbb{M}\rangle_B^{\log}\bigr|}{|\langle \mathbb{M}\rangle_B^{\log}|}\right)^s\,dx\right)^{\frac{1}{s}}
\leq
c s {[\log\mathbb{M}]_{\operatorname{BMO}(B)}}.
$$
The same holds with $\mathbb{M}$ replaced by a scalar weight $\omega$.
\end{lem}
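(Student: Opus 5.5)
The approach is the John--Nirenberg inequality applied to $\log\mathbb{M}$, followed by a comparison between the matrix exponential and its logarithm. Set $A:=\log\mathbb{M}$, $\bar A:=\langle\log\mathbb{M}\rangle_B$ and $\mathbb{M}_0:=\langle\mathbb{M}\rangle_B^{\log}=\exp(\bar A)$. All three are symmetric matrices, though $A(x)$ and $\bar A$ need not commute.

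\textbf{Step 1: John--Nirenberg.} I would apply the John--Nirenberg lemma (matrix version, componentwise or in the spectral norm) to $A$. Writing $\kappa:=[\log\mathbb{M}]_{\mathrm{BMO}(B)}$, there are dimensional constants $c_n,b_n$ with
\[
\frac{1}{|B|}\bigl|\{x\in B: |A(x)-\bar A|>t\}\bigr|\le c_n\exp\left(-\frac{b_n t}{\kappa}\right).
\]
Integrating the distribution function gives $\fint_B \exp(\sigma|A-\bar A|)\,dx\le 2$ whenever $\sigma\kappa\le b_n/2$. Writing $e^{y}-1\le y e^{y}$ and using H\"older, the moments also satisfy
\[
\left(\fint_B |A-\bar A|^{s}e^{s|A-\bar A|}\,dx\right)^{1/s}\le c\,s\,\kappa,
\]
provided $s\kappa\le\gamma$ with $\gamma$ small. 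This is because the $L^{2s}$ norm of $|A-\bar A|$ is of order $s\kappa$, while the exponential factor has bounded $L^{2s}$ mean.

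\textbf{Step 2: pointwise matrix inequality.} I would bound
\[
|\mathbb{M}(x)-\mathbb{M}_0|\,|\mathbb{M}_0|^{-1}
\]
by a function of $|A(x)-\bar A|$. In the scalar or commuting case this is immediate:
\[
\mathbb{M}_0^{-1}\mathbb{M}-\mathrm{Id}=\exp(A-\bar A)-\mathrm{Id},
\]
whose norm is at most $e^{|A-\bar A|}-1$. In the noncommuting case I would use the Duhamel formula
\[
e^{A}-e^{\bar A}=\int_0^1 e^{tA}(A-\bar A)e^{(1-t)\bar A}\,dt,
\]
which gives
\[
|e^A-e^{\bar A}|\le |A-\bar A|\sup_t |e^{tA}|\,|e^{(1-t)\bar A}|.
\]
Now $|e^{tA}|=\omega^t$ and $|e^{(1-t)\bar A}|=|\mathbb{M}_0|^{1-t}$. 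This is where the balance condition \eqref{Balance-1} enters. The eigenvalues of $A$ lie in $[\log\omega-\log\Lambda,\log\omega]$, and those of $\bar A$ lie in an interval of length at most $\log\Lambda$ around $\langle\log\omega\rangle_B$. Hence
\[
\omega(x)\le \Lambda\,|\mathbb{M}_0|\exp(|A(x)-\bar A|)
\]
by comparing top eigenvalues, since $|\lambda_{\max}(A)-\lambda_{\max}(\bar A)|\le|A-\bar A|$ by Weyl's inequality. Together these yield the pointwise bound
\[
\frac{|\mathbb{M}(x)-\mathbb{M}_0|}{|\mathbb{M}_0|}\le c(\Lambda)\,|A(x)-\bar A|\,e^{|A(x)-\bar A|}.
\]

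\textbf{Step 3: integration.} Taking the $L^s$ average of the pointwise bound and inserting the moment estimate from Step 1 gives
\[
\left(\fint_B \left(\frac{|\mathbb{M}-\mathbb{M}_0|}{|\mathbb{M}_0|}\right)^s dx\right)^{1/s}\le c\,s\,\kappa,
\]
provided $\kappa\le\gamma/s$. This fixes $\gamma=\gamma(n,\Lambda)$. The scalar case follows from the same argument with $\omega$, or directly from Lemma~\ref{lem.oM}.

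The main obstacle is Step 2 in the noncommuting case: controlling $e^{tA}$ and $e^{(1-t)\bar A}$ relative to $|\mathbb{M}_0|$ uniformly. I expect Weyl's eigenvalue perturbation inequality together with the condition-number bound $\Lambda$ to handle it, at the cost of the $\Lambda$-dependent constants.
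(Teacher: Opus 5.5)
Your proof is correct and follows the route the paper indicates: the paper gives no argument of its own, only citing \cite{BDGPN2022} and calling the lemma a consequence of John--Nirenberg, and your combination of the John--Nirenberg moment bound with the Duhamel--Weyl pointwise estimate $|\mathbb{M}-\mathbb{M}_0|\le c\,|A-\bar A|\,e^{|A-\bar A|}\,|\mathbb{M}_0|$ is exactly such an argument. Two harmless remarks: first, Weyl's inequality alone gives $\omega(x)\le|\mathbb{M}_0|\,e^{|A(x)-\bar A|}$, so the factor $\Lambda$ and the eigenvalue-interval discussion are unnecessary; second, for the scalar case you should apply the matrix result to $\omega\,\mathrm{Id}$ (or rerun the argument) rather than invoke Lemma~\ref{lem.oM}, which only concerns $\omega=|\mathbb{M}|$.
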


As a consequence of Lemma~\ref{lem.JNL}, the next lemma shows that, {under the small log-BMO condition on the matrix-valued weight}, the weight and its reciprocal satisfy suitable integrability estimates, and the weight belongs to an appropriate Muckenhoupt class.
\begin{lem}\label{lem:logBMO}
There exists a constant $\gamma=\gamma(n,\Lambda)>0$ such that the following hold for all weights $\omega$.
\begin{enumerate}[(a)]
\item If $[\log\omega]_{\mathrm{BMO}(B)}\le \dfrac{\gamma}{s}$ with $s\ge 1$, then
$$
\left(\fint_B \omega^{s}\,dx\right)^{\frac1s}\le 2\,\langle \omega\rangle^{\log}_{B}.
$$

\item If $[\log\omega]_{\mathrm{BMO}(B)}\le \dfrac{\gamma}{s}$ with $s\ge 1$, then
$$
\left(\fint_B \omega^{-s}\,dx\right)^{\frac1s}\le \frac{2}{\langle \omega\rangle^{\log}_{B}}.
$$

\item If $[\log\omega]_{\mathrm{BMO}(B)}\le \gamma\min\!\left\{\frac1p,\frac1{p'}\right\}$
with $1<p<\infty$, then $\omega^p$ is an $\mathcal{A}_p(B)$-Muckenhoupt weight and
$$
[\omega^p]_{\mathcal{A}_p(B)}^{\frac{1}{p}} =
\sup_{B'\subset B}
\left(\fint_{B'}\omega^{p}\,dx\right)^{\frac1p}
\left(\fint_{B'}\omega^{-p'}\,dx\right)^{\frac1{p'}}
\le 4.
$$

\item Let $1<p<\infty$ and $\theta\in(0,1)$ such that $\theta p>1$.
If $[\log\omega]_{\mathrm{BMO}(B)}\le \gamma\min\!\left\{\frac1p,\;1-\frac{1}{\theta p}\right\}$,
then
$$
\sup_{B'\subset B}
\left(\fint_{B'}\omega^{p}\,dx\right)^{\frac1p}
\left(\fint_{B'}\omega^{-(\theta p)'}\,dx\right)^{\frac1{(\theta p)'}}
\le 4.
$$
\end{enumerate}
\end{lem}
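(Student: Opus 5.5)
The statement to address is Lemma~\ref{lem:logBMO}, a four-part consequence of Lemma~\ref{lem.JNL}. The plan is to derive (a) and (b) from the John--Nirenberg type estimate for scalar weights. Then (c) and (d) follow by multiplying the resulting bounds.

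\textbf{Part (a).} Set $\lambda:=\logomean{B}$ and let $\gamma$ be the constant of Lemma~\ref{lem.JNL}, possibly decreased. By Minkowski's inequality,
$$\Bigl(\fint_B\omega^s\,dx\Bigr)^{1/s}\le \lambda+\Bigl(\fint_B|\omega-\lambda|^s\,dx\Bigr)^{1/s}.$$
Lemma~\ref{lem.JNL}, in its scalar version, bounds the last term by $c\,s\,\lambda\,[\log\omega]_{\mathrm{BMO}(B)}\le c\gamma\lambda$. Choosing $\gamma\le 1/c$ gives the bound $2\lambda$.

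\textbf{Part (b).} Apply (a) to the weight $\omega^{-1}$. This is legitimate because $\log(\omega^{-1})=-\log\omega$ has the same BMO seminorm, and $\langle\omega^{-1}\rangle^{\log}_B=1/\logomean{B}$ by the inversion identity recorded earlier.

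\textbf{Part (c).} Fix a ball $B'\subset B$. The BMO seminorm on $B'$ is dominated by that on $B$, since the supremum in \eqref{BMO space} runs over a smaller family. Hence (a) applies on $B'$ with $s=p$ and (b) applies on $B'$ with $s=p'$; the hypothesis $[\log\omega]\le\gamma\min\{1/p,1/p'\}$ is exactly what is needed for both. This gives
$$\Bigl(\fint_{B'}\omega^{p}\,dx\Bigr)^{1/p}\Bigl(\fint_{B'}\omega^{-p'}\,dx\Bigr)^{1/p'}\le 2\langle\omega\rangle^{\log}_{B'}\cdot\frac{2}{\langle\omega\rangle^{\log}_{B'}}=4.$$
Raising this to the power $p$ yields exactly the $\mathcal A_p$ quantity of \eqref{Ap} for $\omega^p$, because $(\omega^p)^{-1/(p-1)}=\omega^{-p'}$.

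\textbf{Part (d).} Argue identically, using $s=p$ in (a) and $s=(\theta p)'=\frac{\theta p}{\theta p-1}$ in (b). The condition required for (b) is $\gamma/(\theta p)'=\gamma\bigl(1-\frac1{\theta p}\bigr)$, which matches the hypothesis. Both exponents are at least $1$ since $\theta p>1$.

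\textbf{Main obstacle.} The only real point is that Lemma~\ref{lem.JNL} is stated relative to the logarithmic mean. Its constant $c$ is uniform in $s\ge1$ because it grows only linearly in $s$, and the linear factor $s$ is cancelled by the smallness $\gamma/s$. Therefore a single $\gamma=\gamma(n,\Lambda)$, namely the minimum of the constant in Lemma~\ref{lem.JNL} and $1/c$, works for all parts simultaneously.
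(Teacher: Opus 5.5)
Your proposal is correct and follows the same route the paper indicates: the paper gives no proof of its own, recalling the lemma from \cite{BDGPN2022} as a consequence of the John--Nirenberg estimate in Lemma~\ref{lem.JNL}, and your argument derives it from exactly that lemma. You get (a) from the triangle inequality around $\logomean{B}$, shrinking $\gamma$ so that $c\gamma\le 1$; (b) by applying (a) to $\omega^{-1}$; and (c)--(d) by multiplying the two bounds on each sub-ball $B'\subset B$ with $s=p$ and $s=p'$, respectively $s=(\theta p)'$ where $1/(\theta p)'=1-\frac{1}{\theta p}$, and this exponent and $\mathcal A_p$ bookkeeping is all correct.
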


It is worth noting that, for every $1<p<\infty$, when $[\log\mathbb{M}]_{\operatorname{BMO}_{R_0}(\mathbb{R}^n)}\le \gamma\min\left\{\frac1p,\frac1{p'}\right\}$, one has $\omega^p \in\mathcal{A}_p(B_r)$ for every ball $B_r \subset \mathbb{R}^n$, even if the given $R_0$ in \eqref{small.log.bmo} is arbitrarily small. Indeed, the case $r\le R_0$ follows directly from Lemma~\ref{lem:logBMO}, whereas the case $r>R_0$ follows by covering $B_r$ with finitely many balls and connecting these balls by finite chains of intersecting balls. For $r>R_0$, the corresponding constants may additionally depend on $r$ and ${R_0}$. This fact is needed in Section~3 to ensure the existence of weak solutions on the whole domain $\Omega$ in the proof of the existence of SOLA.

We also recall the following classical iteration lemma; see \cite[Lemma~4.3]{HL1997}.

\begin{lem}\label{lem-iteration}
Let $\phi$ be a bounded nonnegative function on $[r_1,r_2]$. Suppose that for any
$s_1,s_2$ with $0< r_1 \le s_1 < s_2 \le r_2$,
$$
\phi(s_1)\le \eta \phi(s_2) + \frac{c_1}{(s_2-s_1)^{\alpha}} + c_2,
$$
where $\alpha, c_1, c_2 \ge 0$ and $\eta \in (0,1)$. Then for any
$r_1 \le s_1 < s_2 \le r_2$, it holds that
$$
\phi(r_1)\le c\left[\frac{c_1}{(r_2-r_1)^{\alpha}} + c_2\right]
$$
for some $c=c(\eta,\alpha)>0$.
\end{lem}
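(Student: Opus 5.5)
The plan is the standard geometric iteration. Introduce a sequence of radii increasing from $r_1$ to $r_2$ along a geometric progression. Apply the hypothesis between consecutive radii and iterate. The growth of the factors $(s_2-s_1)^{-\alpha}$ is then absorbed by the decay of $\eta^i$, provided the ratio of the progression is chosen in terms of $\eta$ and $\alpha$.

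\textbf{Choice of radii.} Fix $\tau\in(0,1)$ with $\tau^{\alpha}>\eta$. For instance take $\tau:=\bigl(\frac{1+\eta}{2}\bigr)^{1/\alpha}$ if $\alpha>0$, and $\tau=\frac12$ if $\alpha=0$. In either case $\eta\tau^{-\alpha}<1$, and $\tau$ depends only on $\eta$ and $\alpha$. Define
$$
t_0:=r_1,\qquad t_{i+1}:=t_i+(1-\tau)\tau^{i}(r_2-r_1),\quad i\ge0 .
$$
Then $t_i$ increases, $t_i<r_2$, $t_i\to r_2$, and $t_{i+1}-t_i=(1-\tau)\tau^i(r_2-r_1)$.

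\textbf{One step.} The hypothesis with $s_1=t_i$, $s_2=t_{i+1}$ gives
$$
\phi(t_i)\le \eta\,\phi(t_{i+1})+\frac{c_1}{(1-\tau)^{\alpha}\tau^{i\alpha}(r_2-r_1)^{\alpha}}+c_2 .
$$

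\textbf{Iteration.} Iterating from $i=0$ to $k-1$ yields
$$
\phi(r_1)\le \eta^{k}\phi(t_k)+\sum_{i=0}^{k-1}\eta^{i}\left[\frac{c_1\,\tau^{-i\alpha}}{(1-\tau)^{\alpha}(r_2-r_1)^{\alpha}}+c_2\right].
$$
Since $\phi$ is bounded on $[r_1,r_2]$ and $\eta<1$, the term $\eta^k\phi(t_k)\to0$ as $k\to\infty$. This is the only place where boundedness of $\phi$ is used, and it is essential. Letting $k\to\infty$, the two series are geometric with ratios $\eta\tau^{-\alpha}<1$ and $\eta<1$. We therefore obtain
$$
\phi(r_1)\le \frac{(1-\tau)^{-\alpha}}{1-\eta\tau^{-\alpha}}\,\frac{c_1}{(r_2-r_1)^{\alpha}}+\frac{c_2}{1-\eta},
$$
which is the claim with $c=c(\eta,\alpha)$ equal to the larger of the two prefactors.

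\textbf{Main obstacle.} The only delicate point is the choice of $\tau$ so that $\eta\tau^{-\alpha}<1$. This forces the gaps $t_{i+1}-t_i$ to shrink slowly enough that the blow-up of $(t_{i+1}-t_i)^{-\alpha}$ is beaten by the decay of $\eta^i$. The choice $\tau^{\alpha}=\frac{1+\eta}{2}$ settles this.
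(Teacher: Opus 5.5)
Your proof is correct. The paper does not prove this lemma; it only cites it (Han--Lin, Lemma 4.3), and your argument is essentially the standard proof given there: radii $t_{i+1}=t_i+(1-\tau)\tau^{i}(r_2-r_1)$ with $\tau$ chosen so that $\eta\tau^{-\alpha}<1$, iteration, and boundedness of $\phi$ to make $\eta^k\phi(t_k)\to0$. Your explicit choice of $\tau$, including the case $\alpha=0$, is handled correctly.
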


We conclude this section by recalling the following standard continuity and monotonicity estimates. 
\begin{lem}
For every $\xi_1,\xi_2 \in \mathbb{R}^n$, there exists a constant $c=c(n,p)>0$ such that
\begin{equation}\label{mono.ineq1}
||\xi_1|^{p-2}\xi_1-|\xi_2|^{p-2}\xi_2|\le c
\begin{cases}
|\xi_1-\xi_2|^{p-1}
& \text{if} \quad 1<p\le 2, \\
\left(|\xi_1|+|\xi_2|\right)^{p-2}
|\xi_1-\xi_2|
& \text{if} \quad p>2,
\end{cases}
\end{equation}
and
\begin{equation}\label{mono.ineq2}
\left\langle |\xi_1|^{p-2}\xi_1-|\xi_2|^{p-2}\xi_2, \xi_1-\xi_2 \right\rangle
\ge c
\begin{cases}
\left(|\xi_1|+|\xi_2|\right)^{p-2}
|\xi_1-\xi_2|^2
& \text{if} \quad 1<p\le2,\\
|\xi_1-\xi_2|^{p}
& \text{if} \quad p> 2,
\end{cases}
\end{equation}
where $\langle \cdot,\cdot \rangle$ is the standard inner
product in $\mathbb{R}^n$.
\end{lem}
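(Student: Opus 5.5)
The two inequalities are pointwise algebraic facts about the map $V(\xi):=|\xi|^{p-2}\xi$. Neither the weight nor the equation enters. The plan is the usual integral representation along the segment joining $\xi_2$ to $\xi_1$.

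First reduce to a segment integral. If $\xi_1=\xi_2$ there is nothing to prove. If the segment $\xi_t:=\xi_2+t(\xi_1-\xi_2)$, $t\in[0,1]$, passes through the origin, first perturb $\xi_2$ slightly and pass to the limit at the end, using continuity of both sides. Otherwise $V$ is $C^1$ near the segment, with
$$
DV(\xi)=|\xi|^{p-2}\Big(\id+(p-2)\tfrac{\xi\otimes\xi}{|\xi|^2}\Big).
$$
Its eigenvalues lie between $\min\{1,p-1\}|\xi|^{p-2}$ and $\max\{1,p-1\}|\xi|^{p-2}$. Writing $V(\xi_1)-V(\xi_2)=\int_0^1 DV(\xi_t)(\xi_1-\xi_2)\,dt$ gives
$$
|V(\xi_1)-V(\xi_2)|\le \max\{1,p-1\}\,|\xi_1-\xi_2|\int_0^1|\xi_t|^{p-2}dt,
$$
$$
\langle V(\xi_1)-V(\xi_2),\xi_1-\xi_2\rangle\ge \min\{1,p-1\}\,|\xi_1-\xi_2|^2\int_0^1|\xi_t|^{p-2}dt.
$$

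The key step is the classical two-sided estimate
$$
c^{-1}(|\xi_1|+|\xi_2|)^{p-2}\le\int_0^1|\xi_t|^{p-2}dt\le c\,(|\xi_1|+|\xi_2|)^{p-2},\qquad c=c(p),
$$
valid for $p>1$. For $p\ge2$ the upper bound is trivial, since $|\xi_t|\le|\xi_1|+|\xi_2|$. For the lower bound, $|\xi_t|\ge \tfrac14(|\xi_1|+|\xi_2|)$ on a subinterval of length at least $\tfrac14$: say $|\xi_1|\ge|\xi_2|$ and take $t\in[\tfrac34,1]$, where $|\xi_t|\ge|\xi_1|-\tfrac14|\xi_1-\xi_2|\ge \tfrac12|\xi_1|$. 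For $1<p<2$ the roles swap. The lower bound is now trivial. The upper bound follows because $|\xi_t|\ge \operatorname{dist}(0,\text{line})$ and $|\xi_t|$ grows at least like $|t-t_0|\,|\xi_1-\xi_2|$ away from the closest point. Hence
$$
\int_0^1|\xi_t|^{p-2}dt\le c\,|\xi_1-\xi_2|^{p-2}\int_0^1|t-t_0|^{p-2}dt\le c\,|\xi_1-\xi_2|^{p-2}
$$
whenever $|\xi_1-\xi_2|\gtrsim |\xi_1|+|\xi_2|$, using integrability of $|t|^{p-2}$ since $p-2>-1$. In the complementary case $|\xi_1-\xi_2|\ll|\xi_1|+|\xi_2|$, the segment stays at distance comparable to $|\xi_1|+|\xi_2|$ from the origin. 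This integrability near $t_0$ for $1<p<2$ is the only delicate point.

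Then conclude case by case.
\begin{itemize}
\item For \eqref{mono.ineq1} with $p>2$, use the upper bound.
\item For \eqref{mono.ineq1} with $1<p\le2$, combine the upper bound with $(|\xi_1|+|\xi_2|)^{p-2}\le|\xi_1-\xi_2|^{p-2}$. This holds because $|\xi_1-\xi_2|\le|\xi_1|+|\xi_2|$ and $p-2\le0$, and it yields $|\xi_1-\xi_2|^{p-1}$.
\item For \eqref{mono.ineq2} with $1<p\le2$, use the lower bound directly.
\item For \eqref{mono.ineq2} with $p>2$, use the lower bound together with $(|\xi_1|+|\xi_2|)^{p-2}\ge|\xi_1-\xi_2|^{p-2}$.
\end{itemize}
All constants depend only on $p$, hence on $(n,p)$ as stated.
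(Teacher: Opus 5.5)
Your proof is correct. The paper gives no proof of this lemma and only recalls it as standard; your argument (the segment representation $V(\xi_1)-V(\xi_2)=\int_0^1 DV(\xi_t)(\xi_1-\xi_2)\,dt$ plus the two-sided bound $\int_0^1|\xi_t|^{p-2}\,dt\simeq(|\xi_1|+|\xi_2|)^{p-2}$ for $p>1$) is the classical proof behind it.

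One cosmetic point: when $\xi_1=0$, perturbing $\xi_2$ does not move the segment off the origin. The perturbation is unnecessary anyway, since for $p>1$ the map $t\mapsto V(\xi_t)$ is absolutely continuous, its derivative being bounded by $c\,|\xi_t|^{p-2}|\xi_1-\xi_2|$, which is integrable in $t$.
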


\section{Proof of Theorem~\ref{Existence.of.SOLA}}
In this section, we establish the existence of SOLA to problem \eqref{main.eq}.

\begin{proof}[Proof of Theorem~\ref{Existence.of.SOLA}]

The strategy of the proof relies on a standard approximation argument, which is carried out in two steps.
We first derive uniform $L^{\tilde{q}}$-estimate for the weak solutions to \eqref{main.eq} with every $1<\tilde q<\frac{n(p-1)}{n-1}$ under the assumption that $\mu$ is regular.
In the second step, we construct a sequence of solutions to regularized problems; by a compactness argument based on the boundedness from the first step, we then show that a subsequence converges to a limit function, which is shown to be a SOLA to \eqref{main.eq}.

To obtain the uniform estimate on the domain $\Omega$ based on the {small log-BMO condition}, which is local property in nature, in the first step, we will use covering arguments.
Hence, for the simplicity of notation, the generic constant $c$ in this section depends only on $n,p, \tilde{q}, \Lambda, R_0, d$.

\medskip
\noindent\textbf{Step 1: Uniform $L^{\tilde{q}}$-estimate.}
In this step, we assume that $\mu$ belongs to $C^{\infty}(\Omega)\cap L^{\infty}(\Omega)$ to ensure the existence of a unique weak solution $u \in W^{1,p}_{0,\omega}(\Omega)$ to \eqref{main.eq}.
We now assert the following statement:
\begin{quote}
For every $1<\tilde q<\frac{n(p-1)}{n-1}$, there exists {a constant $\kappa_1=\kappa_1(n,p,\tilde q,\Lambda)>0$ such that if $[\log\mathbb{M}]_{\operatorname{BMO}_{R_0}(\mathbb{R}^n)}\le \kappa_1$ for some $R_0>0$}, then there is a constant $c=c(n, p, \tilde q, \Lambda, R_0, d)$ satisfying
\begin{equation}\label{eq.3.1}
\int_{\Omega} \bigl(\omega^{p'}|Du|\bigr)^{\tilde q}\,dx \le c\,|\mu|(\Omega)^{\frac{\tilde q}{p-1}}.
\end{equation}
\end{quote}

Define $u_{\pm}:=\max\{\pm u,0\}$, \(\Omega^{+}:=\{x\in\Omega: u(x)>0\}\), and \(\Omega^{-}:=\Omega\setminus\Omega^{+}\).
Testing \eqref{main.eq} with $\varphi = h^{1-\xi} - (h+u_{\pm})^{1-\xi}\in W^{1,p}_{0,\omega}(\Omega)$ for some constants $h>0$, $\xi>1$ chosen shortly, we have
\begin{equation*}\label{est.u.1}
\int_{\Omega} \frac{\omega^{p}|Du|^{p}}{(h+|u|)^{\xi}}\,dx
\le  c\frac{h^{1-\xi}}{\xi-1}|\mu|(\Omega).
\end{equation*}
Taking
$$
 h = \left(\int_\Omega {\omega}^\frac{\tilde q p'}{p-\tilde q}|u|^{\frac{\xi \tilde q}{p-\tilde q}}\,dx\right)^{\frac{p-\tilde q}{\xi \tilde q}}\left(\int_\Omega {\omega}^\frac{\tilde q p'}{p-\tilde q}\,dx\right)^{-\frac{p-\tilde q}{\tilde q\xi}}, 
 $$
H\"older's inequality gives
\begin{equation}\label{eq3-5}
\begin{aligned}
\int_\Omega ({\omega}^{p'}|Du|)^{\tilde q}\,dx
&\le \left(\int_\Omega \frac{\omega^p|Du|^p}{(h+|u|)^\xi}\,dx\right)^{\frac{\tilde q}{p}}
\left(\int_\Omega {\omega}^\frac{{(p'-1)p\tilde q}}{p-\tilde q}(h+|u|)^{\frac{\xi \tilde q}{p-\tilde q}}\,dx\right)^{\frac{p-\tilde q}{p}} \\
&\le c  \left(\frac{|\mu|(\Omega)}{\xi-1}\right)^{\frac{\tilde q}{p}}h^\frac{\tilde q}{p}
\left( \int_\Omega {\omega}^\frac{\tilde q p'}{p-\tilde q}\,dx\right)^{\frac{p-\tilde q}{p}}.
\end{aligned}
\end{equation}

To establish the estimate \eqref{eq.3.1}, it remains to bound $h$.
Taking $\xi = \frac{n - \tilde{q} + n(p - \tilde{q})}{2(n-\tilde{q})}$, we have
\begin{equation}\label{param.xi}
\frac{n \tilde{q}(\xi -1) p'}{n(p-\tilde{q}) - \xi(n-\tilde{q})}=\frac{n\tilde{q}p'}{n-\tilde{q}} > \max \left\{ \frac{n\tilde{q}}{n-\tilde{q}},\frac{\tilde{q}p'}{p-\tilde{q}} \right\}.
\end{equation}
Assume
\begin{equation}\label{exist.M.cond}
{[\log\mathbb{M}]_{\mathrm{BMO}_{R_0}(\mathbb{R}^n)}} \leq \gamma \max \left\{\tilde{q}', \frac{n\tilde qp'}{n-\tilde q}\right\}^{-1} =: {\kappa_1},
\end{equation}
where $\gamma = \gamma(n, \Lambda) > 0$ is the constant given in Lemma~\ref{lem:logBMO}.
Recalling $2 - \frac{1}{n} < p \leq n$, we note basic inequalities
\begin{equation}\label{param.xi.1}
1 < \tilde{q} < \frac{n(p-1)}{n-1} \leq n, \quad 1 < \xi < \dfrac{n(p-\tilde{q})}{n-\tilde{q}}, \quad \text{and} \quad \frac{\xi \tilde{q}}{p-\tilde{q}} < \frac{n \tilde{q}}{n - \tilde{q}} .
\end{equation}
Now we apply the Poincar\'e type inequality given in Lemma~\ref{poincare.global} to get
\begin{equation}\label{exist.bd.1}
\begin{aligned}
\int_\Omega {\omega}^\frac{\tilde q p'}{p-\tilde q}|u|^{\frac{\xi \tilde q}{p-\tilde q}}\,dx
& = \int_{\Omega} \left( \omega^{p'} |u| \right)^{\frac{\xi \tilde{q}}{p-\tilde{q}}} \omega^{-\frac{\tilde{q} (\xi -1) p'}{p- \tilde{q}}} \dx \\
& \leq c \left( \int_{\Omega} ( \omega^{p'} |u| )^{\frac{n \tilde{q}}{n- \tilde{q}}} \dx \right)^{\frac{(n-\tilde{q})\xi}{n(p-\tilde{q})}} 
\left( \int_{\Omega} \omega^{- \frac{n\tilde{q}p'}{n-\tilde{q}}}\;dx \right)^{1-\frac{(n-\tilde{q})\xi}{n(p-\tilde{q})}} \\
& \leq c \left( \int_{\Omega} ( \omega^{p'} |Du| )^{\tilde{q}} \dx \right)^{\frac{\xi}{p-\tilde{q}}} 
\left( \int_{\Omega} \omega^{- \frac{n\tilde{q}p'}{n-\tilde{q}}}\;dx \right)^{1-\frac{(n-\tilde{q})\xi}{n(p-\tilde{q})}}.
\end{aligned}
\end{equation}
It is worth mentioning that by Lemma~\ref{lem:logBMO} with \eqref{exist.M.cond}, the last line is finite.

Combining \eqref{eq3-5} and \eqref{exist.bd.1}, we have
\begin{equation}\label{eq13}
\begin{aligned}
\int_\Omega ({\omega}^{p'}|Du|)^{\tilde q}\,dx
& \leq c |\mu|(\Omega)^{\frac{\tilde q}{p}}
\left(\int_\Omega (\omega^{p'}|Du| )^{\tilde q} dx\right)^{\frac{1}{p}},
\end{aligned}
\end{equation}
where $c=c(n, p, \tilde q, \Lambda, R_0, d)>0.$ Here we have also used the fact that
\begin{align*}
\left(\int_\Omega \omega^{-\frac{n\tilde{q}p'}{n-\tilde{q}}} dx\right)^{\frac{(n-\tilde{q})\left(n(p-\tilde{q})-n+\tilde{q}\right)}
{np\left(n-\tilde{q}+n(p-\tilde{q})\right)}} 
\left(\int_\Omega \omega^{\frac{\tilde qp'}{p-\tilde q}} dx\right)^{\frac{(p-\tilde{q})\left(n(p-\tilde{q})-n+\tilde{q}\right)}
{p\left(n-\tilde{q}+n(p-\tilde{q})\right)}}
\leq c,
\end{align*}
which is a consequence of covering argument and Lemma~\ref{lem:logBMO} with the assumption \eqref{exist.M.cond}.
Absorbing the last integral term in \eqref{eq13} into the left-hand side, we finally arrive at \eqref{eq.3.1}.

\,

\noindent\textbf{Step 2: Approximation and Convergence.}

By a standard mollification, we construct a sequence $\{\mu_k\}_{k\ge 1} \subset C^\infty(\Omega)\cap L^{\infty}(\Omega)$ such that $\{ \mu_k \}$ converges to $\mu$ weakly in measure.
Then, there exist the corresponding weak solutions $\{u_k\}_{k\ge 1} \subset W^{1,p}_{0,\omega}(\Omega)$ such that each $u_k$ solves \eqref{main.eq} with the datum $\mu_k$.

Now we are going to prove that $\{ u_k \}_{k \ge 1}$ converges to a SOLA to \eqref{main.eq} up to non-relabeled subsequence.
To this end, we take
\begin{equation}\label{kappa.0}
\begin{aligned}
\kappa_0 := \gamma \max \left\{ \tilde{q}' , \frac{n \tilde{q}p' }{n - \tilde{q}} \right\}^{-1} \leq \gamma \max \left\{ 2, \frac{\tilde{q} p'}{\tilde{q} -2 +p} \right\}^{-1}
\end{aligned}
\end{equation}
with
\begin{equation}\label{tilde.q}
\tilde{q} := \frac12\left(\max\{1,p-1\} + \frac{n(p-1)}{n-1}\right) \in \left( \max\{1,p-1\} , \frac{n(p-1)}{n-1} \right),
\end{equation}
and assume
\begin{equation*}
\begin{aligned}
{[\log\mathbb{M}]_{\mathrm{BMO}_{R_0}(\mathbb{R}^n)}< \kappa_0.}
\end{aligned}
\end{equation*}
Then \eqref{eq.3.1} holds true for $u, \mu$ replaced by $u_k, \mu_k$ for each $k \in \mathbb{N}$.
By \eqref{weak-3}, for large enough $k \in \mathbb{N}$, we further have
\begin{equation}\label{ex.step2.bd}
\int_{\Omega} \bigl(\omega^{p'}|Du_k|\bigr)^{\tilde q}\,dx \le c\,|\mu|(\Omega)^{\frac{\tilde q}{p-1}}.
\end{equation}
Moreover, by Lemma~\ref{lem:logBMO} with \eqref{kappa.0}, $\| \omega \|_{L^{\gamma/\kappa_0}(B_{2d})} < \infty$.
We will use this fact later in this section without mentioning.
In the remaining part of this step, we use the temporary notation $c_{*}$ which stands for constants depending on $n, p, \Lambda, R_0, d, |\mu|(\Omega), \| \omega \|_{L^{\gamma/\kappa_0}(B_{2d})} $ and may vary from line to line.

From the fact that $\{u_k\}_{k\ge1}$ is uniformly bounded in $W^{1,\tilde q}_{0,\omega^{p'}}(\Omega)$, which is a reflexive Banach space when $\tilde q>1$, there exists a non-relabeled subsequence satisfying 
$$u_k \rightharpoonup u \quad\text{in } W^{1,\tilde q}_{0,\omega^{p'}}(\Omega).$$
Then the weighted Rellich--Kondrachov compactness theorem in \cite{K1997} implies that, after passing to a further subsequence if necessary, we have
$$
u_k \to u \text{ in } L^{\tilde q}_{\omega^{p'}}(\Omega)
\qquad\text{and}\qquad
u_k \to u \quad \text{a.e. in }\Omega.
$$
Consequently, $u_k$ converges to $u$ in measure with respect to the Lebesgue measure. That is, for any $\varepsilon > 0$ and sufficiently large $k$, we have
$$\mathrm{meas} \left\{x\in\Omega:|u_k-u|>\frac\varepsilon2 \right\}\to 0.$$
 Thus, for $l,m$ sufficiently large,
\begin{equation}\label{meas.est.mn}
\mathrm{meas}\left\{x\in\Omega:|u_l-u_m|>\varepsilon \right\} \le \mathrm{meas}\left\{x\in\Omega:|u_l-u|>\frac\varepsilon2 \right\}+\mathrm{meas}\left\{ x\in\Omega:|u-u_m|>\frac\varepsilon2 \right\} \leq \varepsilon.
\end{equation}

For a fixed $\varepsilon>0$, we consider the truncation function $\psi \in C(\mathbb{R},\mathbb{R})$ defined by
$$
\psi(s)=
\begin{cases}
\varepsilon  & \text{if } s>\varepsilon,\\[4pt]
s  & \text{if } |s|\le \varepsilon,\\[4pt]
-\varepsilon  & \text{if } s<-\varepsilon.
\end{cases}
$$
For any $l, m \in \mathbb{N}$, we test the corresponding equations for both $(\mu_l, u_l)$ and $(\mu_m, u_m)$ with $\varphi=\psi(u_l-u_m) \in W^{1,p}_{0,\omega}(\Omega)$, and then subtract the two equations to obtain
\begin{equation}\label{eq17}
\begin{aligned}
& \int_{D_{l,m,\varepsilon}} \langle|\mathbb{M} D u_l|^{p-2} \mathbb{M} D u_l - |\mathbb{M} D u_m|^{p-2} \mathbb{M} D u_m , \mathbb{M} D u_l - \mathbb{M} D u_m \rangle \,dx \\
& = \int_{\Omega} \psi'(u_l-u_m) \langle |\mathbb{M} D u_l|^{p-2} \mathbb{M} D u_l - |\mathbb{M}D u_m|^{p-2} \mathbb{M} D u_m , \mathbb{M} D u_l - \mathbb{M} D u_m \rangle \,dx \\
& = \int_{\Omega} \psi(u_l-u_m)\,d(\mu_l-\mu_m)
\leq 2 \varepsilon |\mu|(\Omega),
\end{aligned}
\end{equation}
where the set $D_{l,m,\varepsilon}$ is defined by
$$
D_{l,m,\varepsilon}
 := \left\{ x\in\Omega \,\big|\, |u_l(x)-u_m(x)| \le \varepsilon \right\}.
$$
Since the monotonicity estimate \eqref{mono.ineq2} takes different forms in the subquadratic and superquadratic cases, we estimate the left-hand side of \eqref{eq17} separately for $2-\frac{1}{n}<p\leq 2$ and $2<p\leq n.$

\textbf{Case 1:} $2-\frac1n<p\le2.$ 
By {\eqref{mono.ineq2}}, \eqref{Balance-1}, and \eqref{eq17},
\begin{equation*}\label{eq19}
\begin{aligned}
& \int_{D_{l,m,\varepsilon}} \omega^{p}(|Du_l| + |Du_m|)^{p-2}|Du_l-Du_m|^2\,dx \\
& \leq c \int_{D_{l,m,\varepsilon}} (|\mathbb{M}Du_l|+|\mathbb{M}Du_m|)^{p-2}|\mathbb{M}Du_l-\mathbb{M}Du_m|^2\,dx \\
& \leq c \int_{D_{l,m,\varepsilon}}
\langle |\mathbb{M}Du_l|^{p-2}\mathbb{M}Du_l-|\mathbb{M}Du_m|^{p-2}\mathbb{M}Du_m , \mathbb{M}Du_l-\mathbb{M}Du_m \rangle \,dx \\
& \leq c \varepsilon |\mu|(\Omega).
\end{aligned}
\end{equation*}
Hence, by H\"older's inequality,
\begin{equation}\label{eq20}
\begin{aligned}
& \int_{D_{l,m,\varepsilon}} \omega^{p'}|Du_l-Du_m|\,dx \\
&\le
\left(\int_{D_{l,m,\varepsilon}} \omega^{p} (|Du_{l}| + |Du_{m}|)^{p-2}|Du_l-Du_m|^2\,dx\right)^{\frac12} \cdot \\
& \qquad \cdot
\left(\int_{D_{l,m,\varepsilon}} \omega^{p'(3-p)} (|Du_l| + |Du_m|)^{2-p} \,dx\right)^{\frac12} \\
&\le c \varepsilon^{\frac12} |\mu|(\Omega)^{\frac{1}{2}}
\left(\int_{D_{l,m,\varepsilon}} \omega^{p'(3-p)} (|Du_l| + |Du_m|)^{2-p} \,dx\right)^{\frac12}.
\end{aligned}
\end{equation}
When $p=2$, we find
\begin{equation}\label{ex.step2.p2}
\int_{D_{l,m,\varepsilon}} \omega^{p'(3-p)} (|Du_l| + |Du_m|)^{2-p} \,dx \leq \int_{\Omega} \omega^2 \,dx < \infty.
\end{equation}
Moreover, when $2-\frac1n<p<2$, H\"older's inequality with exponents $\frac{\tilde q}{2-p}$ and $\frac{\tilde q}{\tilde q-2+p}$, and \eqref{ex.step2.bd} yield
\begin{equation}\label{ex.step2.ple2}
\begin{aligned}
& \int_{D_{l,m,\varepsilon}} \omega^{p'(3-p)} (|Du_l| + |Du_m|)^{2-p} \,dx \\
&\le
\left(\int_{D_{l,m,\varepsilon}} [\omega^{p' }(|Du_l|+|Du_m|)]^{\tilde q}\,dx\right)^{\frac{2-p}{\tilde q}}
\left(\int_{D_{l,m,\varepsilon}} \omega^{\frac{\tilde qp'}{\tilde q-2+p}}\,dx\right)^{\frac{\tilde q-2+p}{\tilde q}} \\
&\le
c |\mu|(\Omega)^{\frac{2-p}{p-1}} \left(\int_{\Omega} \omega^{\frac{\tilde qp'}{\tilde q-2+p}}\,dx\right)^{\frac{\tilde q-2+p}{\tilde q}}.
\end{aligned}
\end{equation}
Merging \eqref{eq20}, \eqref{ex.step2.p2}, and \eqref{ex.step2.ple2}, we deduce
\begin{equation}\label{eq21}
\begin{aligned}
\int_{D_{l,m,\varepsilon}} \omega^{p'}|Du_l-Du_m|\,dx
\le c_{*} \varepsilon^{\frac12}.
\end{aligned}
\end{equation}

On the other hand, by H\"older's inequality and \eqref{ex.step2.bd}, we have 
\begin{equation}\label{eq22}
\begin{aligned}
&\int_{\Omega\setminus D_{l,m,\varepsilon}} \omega^{p'}|Du_l-Du_m|\,dx\\
&\le
\left(\int_{\Omega\setminus D_{l,m,\varepsilon}} (\omega^{p'}|Du_l-Du_m|)^{\tilde q}\,dx\right)^{\frac{1}{\tilde q}}
\mathrm{meas}\{x\in\Omega:\ |u_l-u_m|>\varepsilon\}^{1-\frac{1}{\tilde q}} \\
&\le c |\mu|(\Omega)^{\frac{1}{p-1}}\,\mathrm{meas}\{x\in\Omega:\ |u_l-u_m|>\varepsilon\}^{1-\frac{1}{\tilde q}}.
\end{aligned}
\end{equation}
It then follows from \eqref{eq22} that
$$
\int_{\Omega\setminus D_{l,m,\varepsilon}} \omega^{p'}|Du_l-Du_m|\,dx
\le c \varepsilon^{1-\frac{1}{\tilde q}} |\mu|(\Omega)^{\frac{1}{p-1}} .
$$
Combining this with \eqref{eq21}, we conclude
$$
\int_\Omega \omega^{p'}|Du_l-Du_m|\,dx
\le c_{*} ( \varepsilon^{\frac12} + \varepsilon^{1-\frac{1}{\tilde q}}),
$$
which means $\{Du_k\}_{k\ge 1}$ is a Cauchy sequence in $L^1_{\omega^{p'}}(\Omega)$, and hence
\begin{equation}\label{eq23}
\begin{aligned}
\int_\Omega \omega^{p'}|Du_k-Du|\,dx\to 0.
\end{aligned}
\end{equation}

Finally, since $0<p-1\le1$, it follows from \eqref{Balance-1}, \eqref{mono.ineq1}, and \eqref{eq23} that
\begin{align*}
|\mathbb{M}Du_k|^{p-2}\mathbb{M}^2Du_k \to |\mathbb{M}Du|^{p-2}\mathbb{M}^2Du \quad \text{in} \quad L^{1}(\Omega)
\end{align*}
and therefore
$$
\int_\Omega |\mathbb{M}Du|^{p-2}\mathbb{M}^2Du\cdot D\varphi\,dx = \int_{\Omega} \varphi \, d \mu ,
\qquad \forall\,\varphi \in C_c^{\infty}(\Omega).
$$

\textbf{Case 2:} $2<p\le n.$ 
By \eqref{Balance-1}, \eqref{mono.ineq2}, and \eqref{eq17}, we have
\begin{equation*}
\begin{aligned}
\int_{D_{l,m,\varepsilon}} \omega^{p}\,|D u_l - D u_m|^{p}\,dx
&\leq c \varepsilon |\mu|(\Omega).
\end{aligned}
\end{equation*}
By H\"older's inequality and Lemma~\ref{lem:logBMO} with \eqref{kappa.0}, it follows that
\begin{equation}\label{eq26}
\begin{aligned}
&\int_{D_{l,m,\varepsilon}} \omega^{p} |D u_l - D u_m|^{p-1}\,dx\\
&\le  \left( \int_{D_{l,m,\varepsilon}} \omega^{p} |D u_l - D u_m|^{p}\,dx
   \right)^{\frac{p-1}{p}}
   \left( \int_{D_{l,m,\varepsilon}} \omega^{p}\,dx \right)^{\frac1p} \\
&\le c \varepsilon^{\frac{p-1}{p}}|\mu|(\Omega)^{\frac{p-1}{p}}
   \left( \int_{\Omega} \omega^{p}\,dx \right)^{\frac1p} 
 \le c_* \varepsilon^{\frac{p-1}{p}}.
\end{aligned}
\end{equation}

On the other hand, combining \eqref{ex.step2.bd} and \eqref{meas.est.mn}, we obtain
\begin{equation}\label{eq28}
\begin{aligned}
&\int_{\Omega\setminus D_{l,m,\varepsilon}} \omega^{p}|Du_l-Du_m|^{p-1}\,dx\\
&\le
\left(\int_{\Omega\setminus D_{l,m,\varepsilon}} \left(\omega^{p'}|Du_l-Du_m|\right)^{\tilde q}dx\right)^{\frac{2(n-1)}{2n-1}}
\mathrm{meas}\{x\in\Omega:\ |u_l-u_m|>\varepsilon\}^{\frac{1}{2n-1}}\\
&\le c\varepsilon^{\frac{1}{2n-1}} |\mu|(\Omega).
\end{aligned}
\end{equation}
Hence, \eqref{eq26} and \eqref{eq28} imply that
$$
\int_\Omega \omega^{p}|Du_l-Du_m|^{p-1}\,dx
\le c_{*} (\varepsilon^{\frac{p-1}{p}} + \varepsilon^{\frac{1}{2n-1}}),
$$
which means that $\{Du_k\}_{k\ge1}$ is a Cauchy sequence in $L^{p-1}_{\omega^{p'}}(\Omega)$.
Therefore,
\begin{equation}\label{eq29}
\begin{aligned}
\int_\Omega \omega^{p}|Du_k-Du|^{p-1}\,dx\to 0.
\end{aligned}
\end{equation}

Finally, from \eqref{Balance-1}, \eqref{mono.ineq1}, \eqref{ex.step2.bd}, and \eqref{eq29}, we obtain
\begin{equation*}
\begin{aligned}
&\int_{\Omega}\bigl||\mathbb{M} D u_k|^{p-2} \mathbb{M}^{2} D u_k-|\mathbb{M} D u|^{p-2} \mathbb{M}^{2} D u\bigr|\,dx\\
&\le c \left(\int_{\Omega}\omega^{p}\bigl(|Du_k|+|Du|\bigr)^{p-1} dx\right)^{\frac{p-2}{p-1}}\left(\int_{\Omega}\omega^{p}|Du_k-Du|^{p-1} dx\right)^{\frac{1}{p-1}} \\
&\le c\left(\int_{\Omega}\omega^{p}|Du_k|^{p-1} dx+\int_{\Omega}\omega^{p}|Du_k-Du|^{p-1} dx\right)^{\frac{p-2}{p-1}}\left(\int_{\Omega}\omega^{p}|Du_k-Du|^{p-1} dx\right)^{\frac{1}{p-1}}
 \to 0.
\end{aligned}
\end{equation*}
At this stage, by following the same reasoning given in the last part of {\bf Case 1}, we conclude that $u$ is a SOLA to \eqref{main.eq} in the case $2 < p \leq n$.
This completes the proof of Theorem~\ref{Existence.of.SOLA}.
\end{proof}

\section{Reverse H\"older inequality and Comparison estimates}

In what follows in this section, we assume that $\mu \in C^\infty(\Omega)\cap L^{\infty}(\Omega)$ and let $u\in W^{1,p}_{0,\omega}(\Omega)$ be the weak solution to \eqref{main.eq}.

Suppose $B_{3r}\subset B_{2R}\subset B_{R_0}$.
 Let $v\in u+W^{1,p}_{0,\omega}(B_{2r})$ be the weak solution to
\begin{equation}\label{eq1}
\left\{
\begin{aligned}
-\operatorname{div}\left(|\mathbb{M}Dv|^{p-2}\mathbb{M}^{2}Dv\right) &= 0 && \text{in } B_{2r},\\
v &= u && \text{on } \partial B_{2r}.
\end{aligned}
\right.
\end{equation}
For simplicity of notation, we introduce the vector field $\mathbb{A} : \R^n \times \R^n \to \R^n$ given by
$$\A{x,\xi} = |\mathbb{M}(x)|^{\frac{1}{p-1}} \mathbb{M}(x) \xi, 
 \quad \forall x \in \R^n, \,\, \xi \in \R^n.$$
Unless there is ambiguity regarding the spatial variable $x$, we shall write $\A{\xi}$ instead of $\A{x, \xi}$.

\subsection{Reverse H\"older inequality}

In this subsection, we establish two reverse H\"older-type estimates for $\A{Dv}$ and $\A{Du}$, respectively.
We begin with the following self-improving property for the weak solution $v$ to homogeneous equation \eqref{eq1}.

\begin{lem}\label{Reverse.H}
Assume that \eqref{p} holds. Let $v\in u+W^{1,p}_{0,\omega}(B_{2r})$ be the weak solution to \eqref{eq1}.
There exists a constant $\kappa_2=\kappa_2(n,p,\Lambda)>0$ such that for every {$\max\{1,\frac{p}{2}\}\le q_0< \infty$} with
$$[\log\mathbb{M}]_{\mathrm{BMO}({B_{2\rho}})}<\frac{\kappa_2}{q_0}$$
there holds
\begin{equation}\label{Reverse}
\left(\fint_{B_\rho} |\A{Dv}|^{q_0}\,dx\right)^{\frac{1}{q_0}}
\le c\fint_{B_{2\rho}} |\A{Dv}|\,dx
\end{equation}
for any $B_{2 \rho} \subset B_{2r}$, where $c=c(n,p,q_0,\Lambda)>0$.
\end{lem}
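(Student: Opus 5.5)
The argument rests on the higher integrability of $\mathbb{M}Dv$ from \cite{BDGPN2022}, combined with a frozen-weight trick. The point is that $|\A{Dv}| = \omega^{\frac{1}{p-1}}|\mathbb{M}Dv|$ is comparable, up to the factor $\omega^{\frac{1}{p-1}}$, to $|\mathbb{M}Dv|$. Under the small log-BMO condition, this factor behaves on each ball like the constant $(\logomean{B})^{\frac{1}{p-1}}$, with arbitrarily high integrability of both $\omega/\logomean{B}$ and its reciprocal.

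\textbf{Step 1 (reverse Hölder for $\mathbb{M}Dv$).} Since $v$ solves the homogeneous equation \eqref{eq1}, it is a weak solution of \eqref{eq.D} with $G=0$. The local Calderón--Zygmund/higher integrability result of \cite{BDGPN2022} then gives, for every $s\ge p$ and $[\log\mathbb{M}]_{\mathrm{BMO}(B_{2\rho})}\le c/s$,
$$\Big(\fint_{B_\rho}|\mathbb{M}Dv|^{s}\,dx\Big)^{\frac1s}\le c\Big(\fint_{B_{2\rho}}|\mathbb{M}Dv|^{t}\,dx\Big)^{\frac1t}$$
for some $t<1$ (or $t=1$). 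If I cannot cite this directly in that form, I would rebuild it. First, the Caccioppoli inequality: test \eqref{eq1} with $\eta^p(v-\langle v\rangle)$ and use \eqref{Balance-1}. Second, apply the Poincaré inequality of Lemma~\ref{poincare-0} with $\theta<1$, which is admissible by Lemma~\ref{lem:logBMO}(d). This yields a reverse Hölder inequality with exponent $\theta p$ on the right. Gehring's lemma then raises the left exponent to $p+\epsilon$, and the smallness of the log-BMO seminorm together with the approximation argument of \cite{BDGPN2022} gives any $s<\infty$. The standard iteration argument, via Lemma~\ref{lem-iteration}, lowers the right-hand exponent to $1$.

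\textbf{Step 2 (insert the weight).} Write $\lambda:=\logomean{B_{2\rho}}$ and set $s=2q_0$. By Hölder's inequality and Lemma~\ref{lem:logBMO}(a),
$$\Big(\fint_{B_\rho}|\A{Dv}|^{q_0}\Big)^{\frac1{q_0}}\le \Big(\fint_{B_\rho}\omega^{\frac{2q_0}{p-1}}\Big)^{\frac1{2q_0}}\Big(\fint_{B_\rho}|\mathbb{M}Dv|^{2q_0}\Big)^{\frac1{2q_0}}\le c\,\lambda^{\frac1{p-1}}\Big(\fint_{B_\rho}|\mathbb{M}Dv|^{2q_0}\Big)^{\frac1{2q_0}}.$$
Here the passage from $\logomean{B_\rho}$ to $\logomean{B_{2\rho}}$ costs only a factor $e^{c[\log\omega]_{\mathrm{BMO}}}$, since the two logarithmic averages differ by at most $2^n[\log\omega]_{\mathrm{BMO}}$, which is controlled via Lemma~\ref{lem.oM}. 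Step 1 then bounds this by $c\lambda^{\frac1{p-1}}\big(\fint_{B_{2\rho}}|\mathbb{M}Dv|^{t}\big)^{1/t}$ with $t<1$. On the other side, Hölder's inequality with negative powers gives
$$\Big(\fint_{B_{2\rho}}|\mathbb{M}Dv|^{t}\Big)^{\frac1t}=\Big(\fint_{B_{2\rho}}|\A{Dv}|^t\omega^{-\frac{t}{p-1}}\Big)^{\frac1t}\le\fint_{B_{2\rho}}|\A{Dv}|\,\Big(\fint_{B_{2\rho}}\omega^{-\frac{t}{(p-1)(1-t)}}\Big)^{\frac{1-t}{t}}.$$
By Lemma~\ref{lem:logBMO}(b), the last factor is at most $c\lambda^{-\frac1{p-1}}$. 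The powers of $\lambda$ cancel, which gives \eqref{Reverse}. Choosing $\kappa_2$ small makes all the required smallness thresholds, which are of order $\gamma/q_0$, hold simultaneously; the restriction $q_0\ge\max\{1,p/2\}$ guarantees $2q_0\ge p$, so Step 1 applies.

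\textbf{Main obstacle.} The delicate part is Step 1 with a right-hand exponent strictly below $1$, and with constants depending only on $n,p,q_0,\Lambda$ under a threshold proportional to $1/q_0$. The weights create a problem: exponent $1$ on the right is not enough, because removing $\omega^{-1/(p-1)}$ requires a Hölder loss. I would obtain the sub-$1$ exponent by the usual Giaquinta--Modica/covering reduction applied to the reverse Hölder inequality of Step 1, keeping track of the weight constants $4$ from Lemma~\ref{lem:logBMO}.
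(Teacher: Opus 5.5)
Your proposal is correct and follows essentially the paper's argument. The paper cites \cite[Theorem~2]{BDGPN2022} for the reverse H\"older inequality of $|\mathbb{M}Dv|$, with exponent $2q_0$ on the left and $\frac12$ on the right (your $t=\frac12$). It then applies Cauchy--Schwarz on both sides together with Lemma~\ref{lem:logBMO}, so the weight factors $|\mathbb{M}|^{\pm\frac{1}{p-1}}$ cancel; the only cosmetic difference is that the paper enlarges the integral of $|\mathbb{M}|^{\frac{2q_0}{p-1}}$ from $B_\rho$ to $B_{2\rho}$ instead of comparing logarithmic means.
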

\begin{proof}
Applying \cite[Theorem~2]{BDGPN2022} to the homogeneous equation \eqref{eq1}, there is a constant $\kappa_3=\kappa_3(n,p,\Lambda)>0$ such that the following higher integrability estimate holds:
\begin{equation}\label{Dening.Thm2}
\left(\fint_{B_\rho} |\mathbb{M}Dv|^{2q_0}\,dx\right)^{\frac{1}{2q_0}}
\le c \left(\fint_{B_{2\rho}} |\mathbb{M}Dv|^{\frac12}\,dx \right)^2,
\end{equation}
provided
$$[\log\mathbb{M}]_{\mathrm{BMO}(B_{2\rho})}\le \frac{\kappa_3}{q_0}.$$
Take 
\begin{equation}\label{gamma2}
\kappa_2=\min\left\{\kappa_3, \frac{(p-1)\gamma}{2}\right\},
\end{equation}
where $\gamma = \gamma(n, \Lambda) > 0$ is the constant given in Lemma~\ref{lem:logBMO}, and assume
\begin{equation*}
[\log\mathbb{M}]_{\mathrm{BMO}({B_{2\rho}})}\le \frac{\kappa_2}{q_0}.
\end{equation*}
It follows from \eqref{Dening.Thm2} and Lemma~\ref{lem:logBMO} that
\begin{equation*}
\begin{aligned}
\fint_{B_\rho} |\A{Dv}|^{q_0}\,dx
&\le \left(\fint_{B_\rho} |\mathbb{M}|^{\frac{2q_0}{p-1}}\,dx\right)^{\frac12}
\left(\fint_{B_\rho} |\mathbb{M}Dv|^{2q_0}\,dx\right)^{\frac12}\\
&\le c\left(\fint_{B_\rho} |\mathbb{M}|^{\frac{2q_0}{p-1}}\,dx\right)^{\frac12}
\left(\fint_{B_{2\rho}} |\mathbb{M}Dv|^{\frac12}\,dx\right)^{2q_0}\\
&\le c\left(\fint_{B_{2\rho}} |\mathbb{M}|^{\frac{2q_0}{p-1}}\,dx\right)^{\frac12}
\left(\fint_{B_{2\rho}} |\mathbb{M}|^{-\frac{1}{p-1}}\,dx\right)^{q_0}
\left(\fint_{B_{2\rho}} |\A{Dv}|\,dx\right)^{q_0}\\
&\le c\left(\fint_{B_{2\rho}} |\A{Dv}|\,dx\right)^{q_0}.
\end{aligned}
\end{equation*}
This completes the proof of Lemma~\ref{Reverse.H}.
\end{proof}

We next establish the following reverse H\"older inequality for $\A{Du}$.
The proof of Lemma~\ref{reverse.Du} relies mainly on Lemma~\ref{Reverse.H} and Lemma~\ref{comparison estimate}.
Although Lemma~\ref{comparison estimate} is stated later for the sake of presentation, its proof is independent of Lemma~\ref{reverse.Du}.

\begin{lem}\label{reverse.Du}
Assume that \eqref{p} holds. Let $u\in W^{1,p}_{0,\omega}(\Omega)$ be the weak solution to \eqref{main.eq}. 
For every $1<\tilde q<\frac{n(p-1)}{n-1}$, there exists a constant ${\kappa_4}={\kappa_4}(n,p,\tilde q,\Lambda)>0$
such that if
$$[\log\mathbb{M}]_{\mathrm{BMO}({B_{3r}})}<{\kappa_4}$$
there holds
\begin{equation}\label{Reverse-G1}
\left(\fint_{B_{2r}} |\A{Du}|^{\tilde q}\,dx\right)^{\frac{1}{\tilde q}}
\le c\fint_{B_{3r}} |\A{Du}|\,dx
+c\left(\frac{|\mu|(B_{3r})}{r^{n-1}}\right)^{\frac{1}{p-1}},
\end{equation}
where $c=c(n,p,{\tilde q},\Lambda)>0$.
\end{lem}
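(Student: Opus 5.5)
The plan is to compare $u$ with the $\mathbb{M}$-harmonic replacement $v$ from \eqref{eq1} on $B_{2r}$. Since $B_{3r}\subset B_{2R}$, we may take the radius in \eqref{eq1} so that the replacement lives on a slightly larger ball. Concretely, let $v$ solve \eqref{eq1} on $B_{2r'}$ with $2r'=\tfrac{5}{2}r$, or on $B_{3r}$ directly with the obvious modification. We then split
\[
|\A{Du}|\le |\A{Du}-\A{Dv}|+|\A{Dv}|.
\]
For the $v$-part, Lemma~\ref{Reverse.H} with $q_0=\max\{\tilde q,1,\tfrac p2\}$ gives, after a standard covering of $B_{2r}$ by balls $B_\rho$ with $B_{2\rho}\subset B_{3r}$,
\[
\Bigl(\fint_{B_{2r}}|\A{Dv}|^{\tilde q}\Bigr)^{1/\tilde q}\le c\fint_{B_{3r}}|\A{Dv}|\,dx .
\]
This holds provided $\kappa_4\le \kappa_2/q_0$, and $q_0\le n$ depends only on $n,p$. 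Applying the triangle inequality once more gives
\[
\fint_{B_{3r}}|\A{Dv}|\le \fint_{B_{3r}}|\A{Du}|+\fint_{B_{3r}}|\A{Du}-\A{Dv}|.
\]
Both steps therefore reduce to a comparison estimate in $L^{\tilde q}$, which is what Lemma~\ref{comparison estimate} should provide:
\[
\Bigl(\fint_{B_{3r}}|\A{Du}-\A{Dv}|^{\tilde q}\,dx\Bigr)^{1/\tilde q}\le c\Bigl(\frac{|\mu|(B_{3r})}{r^{n-1}}\Bigr)^{\frac1{p-1}}
\]
in the range $p\ge2$, possibly with an additional term $\varepsilon\fint|\A{Du}|$ or a mixed term $\bigl(\frac{|\mu|}{r^{n-1}}\bigr)^{\cdot}\bigl(\fint|\A{Du}|\bigr)^{2-p}$ in the singular case.

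If I had to prove the comparison myself, I would imitate Step 1 of Section~3 localized to the ball. Test the difference of the equations with $\varphi=h^{1-\xi}-(h+(u-v)_\pm)^{1-\xi}\in W^{1,p}_{0,\omega}$, which yields
\[
\int \frac{(\text{monotonicity of }\mathbb{M}Du,\mathbb{M}Dv)}{(h+|u-v|)^\xi}\le c\,\frac{h^{1-\xi}}{\xi-1}|\mu|(B_{3r}).
\]
Then pass from $\mathbb{M}(Du-Dv)$ to $\A{Du-Dv}=|\mathbb{M}|^{1/(p-1)}\mathbb{M}(Du-Dv)$ by Hölder, putting the weight $\omega^{p'}$ together with powers of $\omega^{-1}$ into integrals controlled by Lemma~\ref{lem:logBMO}(a),(b). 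These weight integrals all normalize by the logarithmic mean $\logomean{B_{3r}}$, and the powers of that mean cancel by scaling. Finally, choose $h$ as a weighted average of $|u-v|^{\xi\tilde q/(p-\tilde q)}$ and control it by the local weighted Poincaré inequality, Lemma~\ref{poincare-0}, together with Lemma~\ref{lem:logBMO}(d), using the Sobolev exponent $\frac{n\tilde q}{n-\tilde q}$ exactly as in \eqref{exist.bd.1}, and absorb. The factor $r^{1-n}$ appears from the Poincaré scaling.

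In the subquadratic case $p<2$, monotonicity only controls $(|\mathbb{M}Du|+|\mathbb{M}Dv|)^{p-2}|\mathbb{M}(Du-Dv)|^2$. There I would use Hölder as in \eqref{eq20} together with Young's inequality to get
\[
\fint|\A{Du}-\A{Dv}|\le \varepsilon\fint|\A{Du}|+c_\varepsilon\Bigl(\frac{|\mu|}{r^{n-1}}\Bigr)^{\frac1{p-1}}
\]
(in an $L^{\tilde q}$ version). Inserting everything into the split yields
\[
\Bigl(\fint_{B_{2r}}|\A{Du}|^{\tilde q}\Bigr)^{1/\tilde q}\le c\fint_{B_{3r}}|\A{Du}|+\varepsilon\,(\ldots)+c\Bigl(\frac{|\mu|(B_{3r})}{r^{n-1}}\Bigr)^{\frac1{p-1}},
\]
and any leftover $\varepsilon$-term with $\tilde q$-norm on the left is absorbed via Lemma~\ref{lem-iteration} applied on radii between $2r$ and $3r$.

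The main obstacle is the weighted bookkeeping in the comparison step. The quantity $\A{\cdot}$ carries the extra factor $|\mathbb{M}|^{1/(p-1)}$, while the energy naturally controls $|\mathbb{M}Du|^p$. Every Hölder split must therefore produce weight integrals with exponents bounded by $\gamma/[\log\mathbb{M}]$, and this is what fixes $\kappa_4$ as a small multiple of $\gamma\max\{\tilde q',\frac{n\tilde qp'}{n-\tilde q},q_0\}^{-1}$ together with $\kappa_2/q_0$.
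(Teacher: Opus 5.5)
Your proposal is correct and follows the paper's proof. The paper compares $u$ with the $\mathbb{M}$-harmonic replacement $v_t$ on $B_{tr}$ for $2\le s<t\le 3$. It then applies Lemma~\ref{Reverse.H} with a covering, uses the $L^{\tilde q}$ comparison of Lemma~\ref{comparison estimate} (with its mixed term when $p<2$), absorbs via Young's inequality, and concludes with Lemma~\ref{lem-iteration}. Two minor differences: you take $q_0=\max\{\tilde q,\frac p2\}$ where the paper passes through the $L^p$ norm with $q_0=p$, and the replacement must be redone on every $B_{tr}$ rather than on one fixed ball such as $B_{5r/2}$, which your final iteration step implicitly requires.
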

\begin{proof}
Suppose {$2\leq {s}< {t} \leq 3$} and let $v_{t} \in W^{1,p}_{\omega}(B_{{t} r})$ be the weak solution to 
\begin{equation*}
\left\{
\begin{aligned}
-\operatorname{div} \left(|\mathbb{M}Dv_{t}|^{p-2}\mathbb{M}^{2}Dv_{t}\right) &= 0 && \text{in } B_{{t} r},\\
v_{t} &= u && \text{on } \partial B_{{t} r}.
\end{aligned}
\right.
\end{equation*}
Take
\begin{equation}\label{gamma4}
{\kappa_4}:=\min\left\{\kappa_1, \frac{\kappa_2}{p}\right\},
\end{equation}
and assume 
$${[\log\mathbb{M}]_{\mathrm{BMO}({B_{3r}})}}<{\kappa_4}.$$
Applying Lemma~\ref{comparison estimate} to the pair $(u,v_t)$, we have
\begin{equation*}\label{Lem2-1}
\left(\fint_{B_{{t} r}} |\A{Du}-\A{Dv_t}|^{\tilde{q}}\,dx\right)^\frac1{\tilde{q}}
 \le c\left\{\left(\frac{|\mu|(B_{{t} r})}{r^{n-1}}\right)^{\frac{1}{p-1}}+\chi_{\{p<2\}}\left(\frac{|\mu|(B_{{t} r})}{r^{n-1}}\right)\left(\fint_{B_{{t} r}} |\A{Du}|^{\tilde{q}}\,dx\right)^\frac{2-p}{\tilde{q}}\right\}.
\end{equation*}
It follows from \eqref{Reverse} along with covering argument that
\begin{equation}\label{est.lem4.2}
\begin{aligned}
&\left(\fint_{B_{{s} r}} |\A{Du}|^{\tilde q}\,dx\right)^{\frac1{\tilde q}}\\
&\leq c\left(\fint_{B_{{s} r}} |\A{Du}-\A{Dv_t}|^{\tilde q}\,dx\right)^{\frac{1}{\tilde q}} + c\left(\fint_{B_{{s} r}} |\A{Dv_t}|^{\tilde q}\,dx\right)^{\frac1{\tilde q}}\\
&{\leq c\left(\fint_{B_{{s} r}} |\A{Du}-\A{Dv_t}|^{\tilde q}\,dx\right)^{\frac{1}{\tilde q}} + c\left(\fint_{B_{{s} r}} |\A{Dv_t}|^{p}\,dx\right)^{\frac1{p}}}\\
&\leq c\left(\fint_{B_{{s} r}} |\A{Du}-\A{Dv_t}|^{\tilde q}\,dx\right)^{\frac{1}{\tilde q}}+
\frac{c}{({t}-{s})^{n\left(2-\frac1{\tilde q}\right)}}\fint_{B_{{t} r}}|\A{Dv_t}|\,dx\\
&\leq \frac{c}{({t}-{s})^{n\left(2-\frac1{\tilde q}\right)}} \left(\fint_{B_{{t} r}} |\A{Du}-\A{Dv_t}|^{\tilde q}\,dx\right)^{\frac1{\tilde q}}
+\frac{c}{({t}-{s})^{n\left(2-\frac1{\tilde q}\right)}} \fint_{B_{{t} r}} |\A{Du}|\,dx\\
&\leq \frac{c}{({t}-{s})^{n\left(2-\frac1{\tilde q}\right)}}
 \fint_{B_{{t} r}} |\A{Du}|\,dx
+\frac{c}{({t}-{s})^{n\left(2-\frac1{\tilde q}\right)}}
\left(\frac{|\mu|(B_{{t} r})}{r^{n-1}}\right)^{\frac{1}{p-1}}\\
&\quad+ \chi_{\{p<2\}}
\frac{c}{({t}-{s})^{n\left(2-\frac1{\tilde q}\right)}}
\left(\frac{|\mu|(B_{{t} r})}{r^{n-1}}\right)\left(\fint_{B_{{t} r}} |\A{Du}|^{\tilde{q}}\,dx\right)^\frac{2-p}{\tilde{q}}.
\end{aligned}
\end{equation}

If $p\ge 2,$ then \eqref{Reverse-G1} is a direct consequence of \eqref{est.lem4.2}.
We now focus on the case $2-{\frac1n}<p<2$. By applying Young's inequality to \eqref{est.lem4.2} with conjugate exponents
$\frac{1}{2-p}$ and $\frac{1}{p-1}$, we have 
\begin{equation*}\label{Lem2-6}
\begin{aligned}
\left(\fint_{B_{{s} r}} |\A{Du}|^{\tilde q}\,dx\right)^{\frac1{\tilde q}}
&\leq
\frac{1}{2}
\left(\fint_{B_{{t} r}} |\A{Du}|^{\tilde q}\,dx\right)^{\frac{1}{\tilde q}}
+\frac{c}{({t}-{s})^{n\left(2-\frac1{\tilde q}\right)}}
\fint_{B_{{t} r}} |\A{Du}|\,dx\\
&\quad
+\frac{c}{({t}-{s})^{n\left(2-\frac1{\tilde q}\right)}}
\left(\frac{|\mu|(B_{{t} r})}{r^{n-1}}\right)^{\frac{1}{p-1}} 
+\frac{c}
{({t}-{s})^{\frac{n}{p-1}\left(2-\frac1{\tilde q}\right)}}
\left(\frac{|\mu|(B_{{t} r})}{r^{n-1}}\right)^\frac{1}{p-1}.
\end{aligned}
\end{equation*}
Lemma~\ref{lem-iteration} leads to \eqref{Reverse-G1}.
\end{proof}

\begin{rem}\label{rem.comparison}
Lemma~\ref{reverse.Du} is used only to derive the comparison estimate between \eqref{main.eq} and \eqref{eq1} in the case $2-{\frac1n}<p<2$ as stated in Corollary~\ref{comparison.2}. To this end, throughout the remainder of this paper, we fix $\tilde {q}$ as in \eqref{tilde.q}, so that the {constant ${\kappa_4}$ in Lemma~\ref{reverse.Du} depends only on $n$, $p$, and $\Lambda$.}
However, the proof of Lemma~\ref{comparison estimate} still holds for every $1<\tilde{q} < \frac{n(p-1)}{n-1}$.
\end{rem}

\subsection{Comparison estimates between \eqref{main.eq} and \eqref{eq1}}

\begin{lem}\label{comparison estimate}
Assume that \eqref{p} holds. Let $u\in W^{1,p}_{0,\omega}(\Omega)$ be the weak solution to \eqref{main.eq} and $v\in u+W^{1,p}_{0,\omega}(B_{2r})$ be the weak solution to \eqref{eq1}.
Let $\kappa_1=\kappa_1(n,p,\Lambda)>0$ be the constant defined in \eqref{exist.M.cond}, with $\tilde{q}$ as in \eqref{tilde.q}. If
$$[\log\mathbb{M}]_{\mathrm{BMO}({B_{3r}})}<\kappa_1,$$
then
\begin{equation}\label{est.lem.4.4}
\fint_{B_{2r}} |\A{Du}-\A{Dv}|^{\tilde{q}}\,dx 
\le c\left\{\left(\frac{|\mu|(B_{2r})}{r^{n-1}}\right)^{\frac{\tilde{q}}{p-1}}+\chi_{\{p<2\}}\left(\frac{|\mu|(B_{2r})}{r^{n-1}}\right)^{\tilde{q}}\left(\fint_{B_{2r}} |\A{Du}|^{\tilde{q}}\,dx\right)^{2-p}\right\},
\end{equation}
where $c=c(n,p,\Lambda)>0$.
\end{lem}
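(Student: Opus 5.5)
We follow the classical comparison strategy for measure data (as in Mingione and Phuc), adapted to the weighted setting along the lines of Step~1 of the proof of Theorem~\ref{Existence.of.SOLA}. Set $w:=u-v\in W^{1,p}_{0,\omega}(B_{2r})$, extended by zero outside $B_{2r}$. Subtracting the weak formulations of \eqref{main.eq} and \eqref{eq1}, for every test function $\varphi\in W^{1,p}_{0,\omega}(B_{2r})\cap L^\infty$ we obtain
\[
\int_{B_{2r}}\bigl\langle |\mathbb M Du|^{p-2}\mathbb M Du-|\mathbb M Dv|^{p-2}\mathbb M Dv,\ \mathbb M D\varphi\bigr\rangle\,dx=\int_{B_{2r}}\varphi\,d\mu .
\]
We test with $\varphi=h^{1-\xi}-(h+w_\pm)^{1-\xi}$, for $h>0$ and $\xi>1$, exactly as in Step~1. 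Writing $V(\xi)$ for the left-hand side of \eqref{mono.ineq2} evaluated at $\mathbb M Du,\mathbb M Dv$, this gives
\[
\int_{B_{2r}}\frac{V}{(h+|w|)^{\xi}}\,dx\le c\,\frac{h^{1-\xi}}{\xi-1}\,|\mu|(B_{2r}).
\]
We also test with the truncations $T_k(w)$, which yields $\int_{\{|w|\le k\}}V\le k|\mu|(B_{2r})$.

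\textbf{Case $p\ge2$.} Here \eqref{mono.ineq2} and \eqref{Balance-1} give $V\ge c\,\omega^p|Dw|^p$. By H\"older's inequality, as in \eqref{eq3-5},
\[
\int (\omega^{p'}|Dw|)^{\tilde q}\le \Bigl(\int\frac{\omega^p|Dw|^p}{(h+|w|)^\xi}\Bigr)^{\tilde q/p}\Bigl(\int\omega^{\frac{\tilde q p'}{p-\tilde q}}(h+|w|)^{\frac{\xi\tilde q}{p-\tilde q}}\Bigr)^{\frac{p-\tilde q}{p}} .
\]
We choose $h$ as the normalized weighted mean of $|w|$ and take $\xi$ as in \eqref{param.xi}. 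The $w$-integral is then controlled through the weighted Sobolev--Poincar\'e inequality on $B_{2r}$. This comes from Lemma~\ref{poincare-0}, applied to $w$ extended by zero, with $\theta$ chosen so that $\theta p=\tilde q$; Lemma~\ref{lem:logBMO}(d) supplies the weight hypothesis under $[\log\mathbb M]_{\mathrm{BMO}(B_{3r})}<\kappa_1$. The weight integrals are replaced, using Lemma~\ref{lem:logBMO}(a),(b), by powers of $\logomean{B_{2r}}$. Tracking the scaling in $r$ gives
\[
\fint_{B_{2r}}(\omega^{p'}|Dw|)^{\tilde q}\le c\bigl(|\mu|(B_{2r})/r^{n-1}\bigr)^{\tilde q/(p-1)} .
\]
Since $|\A{Du}-\A{Dv}|\le \omega^{p'}|Dw|$, the case $p\ge2$ follows.

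\textbf{Case $2-\frac1n<p<2$.} Now \eqref{mono.ineq2} only gives
\[
V\ge c\,\omega^p(|Du|+|Dv|)^{p-2}|Dw|^2 .
\]
We split $\omega^{p'}|Dw|$ as
\[
\bigl[\omega^p(|Du|+|Dv|)^{p-2}|Dw|^2(h+|w|)^{-\xi}\bigr]^{1/2}\cdot\bigl[\omega^{2p'-p}(|Du|+|Dv|)^{2-p}(h+|w|)^{\xi}\bigr]^{1/2},
\]
and note that $\omega^{2p'-p}(|Du|+|Dv|)^{2-p}=\omega^{p'}\bigl(\omega^{p'}(|Du|+|Dv|)\bigr)^{2-p}$. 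We bound $|Dv|\le|Du|+|Dw|$ and apply H\"older with exponents adapted to $\tilde q$. This produces a term
\[
\Bigl(\fint(\omega^{p'}|Du|)^{\tilde q}\Bigr)^{\frac{2-p}{\tilde q}}\cdot\frac{|\mu|(B_{2r})}{r^{n-1}}
\]
together with a term involving $\fint(\omega^{p'}|Dw|)^{\tilde q}$ raised to a power strictly less than one. The latter is absorbed by Young's inequality, which leaves $\bigl(|\mu|/r^{n-1}\bigr)^{1/(p-1)}$. Raising the result to the power $\tilde q$ gives the stated mixed term with the exponent $2-p$ on the $\A{Du}$ average.

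\textbf{Main obstacle.} The delicate step is the H\"older/Poincar\'e bookkeeping with the weight. One must check that every negative and positive power of $\omega$ that appears lies in the range where Lemma~\ref{lem:logBMO} applies with the single threshold $\kappa_1$ from \eqref{exist.M.cond}. One must also check that the product of the log-means $\logomean{B_{2r}}$ cancels exactly, leaving a constant depending only on $(n,p,\Lambda)$ with the correct power of $r$. If the powers do not match directly, the first thing to try is inserting $\logomean{B_{2r}}$ as a normalizing constant, i.e.\ replacing $\mathbb M$ by $\mathbb M/\logomean{B_{2r}}$, which is possible by homogeneity of the equation.
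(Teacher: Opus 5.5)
Your proposal is correct and follows the paper's proof essentially step for step. It uses the same test function $h^{1-\xi}-(h+(u-v)_\pm)^{1-\xi}$ with $\xi$ as in \eqref{param.xi}, reduces the case $p\ge2$ to Step~1 with $\Omega$ replaced by $B_{2r}$, and for $p<2$ uses the same square-root splitting (your weight $\omega^{2p'-p}=\omega^{(3-p)p'}$ is exactly the one in the paper's $\mathbb{I}$), the bound $|Dv|\le|Du|+|Du-Dv|$, Poincar\'e control of $h$, and Young absorption; the extra truncation test and the normalization by $\logomean{B_{2r}}$ are unnecessary but harmless.
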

\begin{proof}
The proof is similar to that of {\bf Step 1} in Section~3. Hence we only emphasize the necessary modifications. 

We set $\xi=\frac{n-\tilde{q}+n(p-\tilde{q})}{2(n-\tilde{q})}.$ Since $\tilde{q}$ is defined as \eqref{tilde.q}, \eqref{param.xi} and \eqref{param.xi.1} are satisfied. For any constant $h>0$, testing \eqref{main.eq} and \eqref{eq1} with $\varphi = h^{1-\xi} - (h+(u-v)_{\pm})^{1-\xi} \in W^{1,p}_{0,\omega}(B_{2r})$, we obtain
\begin{equation}\label{eq38}
\fint_{B_{2r}} \frac{\langle|\mathbb{M} D u|^{p-2} \mathbb{M} Du - |\mathbb{M} Dv|^{p-2} \mathbb{M} Dv , \mathbb{M} Du - \mathbb{M} Dv \rangle }{(h+|u-v|)^{\xi}}\,dx
\le c \frac{h^{1-\xi}}{\xi-1} \frac{|\mu|(B_{2r})}{r^n}.
\end{equation}

When $p \geq 2$, it follows from \eqref{mono.ineq2} and \eqref{eq38} that
\begin{equation*}
\fint_{B_{2r}}
\frac{\omega^p |Du-Dv|^p}{(h+|u-v|)^\xi}\,dx
\le
c \frac{h^{1-\xi}}{\xi-1} \frac{|\mu|(B_{2r})}{r^n}.
\end{equation*}
At this point, the remaining part in this case follows the same lines as in {\bf Step 1} of the proof of Theorem~\ref{Existence.of.SOLA}, with $u$ replaced by $u-v$ and $\Omega$ replaced by $B_{2r}$. Consequently, we obtain
$$
\fint_{B_{2r}} |\A{Du}-\A{Dv}|^{\tilde{q}}\,dx \le c\left(\frac{|\mu|(B_{2r})}{r^{n-1}}\right)^{\frac {\tilde{q}}{p-1}}.
$$

It remains to consider the case $2-{\frac1n}<p < 2$. In this case, we provide the proof in detail based on monotonicity estimate in \eqref{mono.ineq2}.
Since $\tilde q 
<2$, H\"older's inequality with the conjugate exponents $\frac{2}{\tilde q}$ and $\frac{2}{2-\tilde q}$, together with \eqref{mono.ineq2} and \eqref{eq38}, yields
\begin{equation}\label{eq40}
\begin{aligned}
\fint_{B_{2r}} |\A{Du}-\A{Dv}|^{\tilde{q}} \, dx
&\le c\left[\fint_{B_{2r}}\frac{\omega^{p}|Du-Dv|^2}{(h+|u-v|)^\xi}(|Du|+|Dv|)^{p-2}\, dx\right]^\frac{\tilde{q}}{2}\mathbb{I}\\
&\le c \left[\frac{h^{1-\xi}}{\xi-1} \frac{|\mu|(B_{2r})}{r^n}\right]^\frac{\tilde{q}}{2}
\mathbb{I},
\end{aligned}
\end{equation}
where we denote
$$
\mathbb{I}:=\left(\fint_{B_{2r}}\omega^{\frac{(3-p)\tilde q p'}{2-{\tilde q}}}
(|Du|+|Dv|)^{\frac{(2-p){\tilde q}}{2-{\tilde q}}}
(h+|u-v|)^{\frac{\xi \tilde{q}}{2-\tilde{q}}}
\, dx\right)^\frac{2-\tilde{q}}{2}.
$$
To estimate $\mathbb{I}$, we take
$$
h=\left(\fint_{B_{2r}}\omega^{\frac{\tilde{q}p'}{p-\tilde{q}}}|u-v|^{\frac{\xi \tilde{q}}{p-\tilde{q}}}\,dx\right)^{\frac{p-\tilde{q}}{\xi \tilde{q}}}
\left(\fint_{B_{2r}}\omega^{\frac{\tilde{q}p'}{p-\tilde{q}}}\,dx\right)^{\frac{\tilde{q}-p}{\xi \tilde{q}}}.$$
Since $\tilde{q}<p$, by applying H\"older's inequality with exponents $\frac{2-\tilde{q}}{2-p}$ and $\frac{2-\tilde{q}}{p-\tilde{q}}$, we obtain
\begin{equation}\label{eq41}
\begin{aligned}
\mathbb{I}
&\le \left(\fint_{B_{2r}} \omega^{\tilde{q}p'}\bigl(|Du|+|Dv|\bigr)^{\tilde{q}}\,dx
\right)^{\frac{2-p}{2}}\left(\fint_{B_{2r}}\omega^{\frac{\tilde{q} p'}{p-\tilde{q}}}(h+|u-v|)^{\frac{\xi \tilde{q}}{p-\tilde{q}}}\,dx\right)^{\frac{p-\tilde{q}}{2}}\\
&\le c  h^\frac{\xi \tilde{q}}{2}
\left(\fint_{B_{2r}}\omega^{\frac{\tilde{q} p'}{p-\tilde{q}}}\,dx\right)^{\frac{p-\tilde{q}}{2}} 
\left[\left(\fint_{B_{2r}}{|\A{Du}|}^{\tilde{q}}\,dx\right)^{\frac{2-p}{2}}+\left(\fint_{B_{2r}}|\A{Du}-\A{Dv}|^{\tilde{q}}\,dx\right)^{\frac{2-p}{2}}\right].
\end{aligned}
\end{equation}

It remains to estimate $h$. Repeating the argument used to derive \eqref{exist.bd.1}, we obtain
\begin{equation}\label{est.h.2}
h
\le c r \left( \fint_{B_{2r}} |\A{Du}-\A{Dv}|^{\tilde{q}} \dx \right)^{\frac{1}{\tilde{q}}}
\left( \fint_{B_{2r}} \omega^{- \frac{n\tilde{q}p'}{n-\tilde{q}}}\;dx \right)^{\frac{n(p-\tilde{q})-(n-\tilde{q})\xi}{n\tilde{q}\xi }}
\left(\fint_{B_{2r}}\omega^{\frac{\tilde{q}p'}{p-\tilde{q}}}\,dx\right)^{\frac{\tilde{q}-p}{\xi \tilde{q}}}.
\end{equation}

Combining \eqref{eq40}, \eqref{eq41}, and \eqref{est.h.2}, and then applying Young's inequality, we derive
\begin{equation*}\label{eq44}
\begin{aligned}
&\fint_{B_{2r}}|\A{Du}-\A{Dv}|^{\tilde{q}}\,dx\\
&\le c\left(\frac{|\mu|(B_{2r})}{r^{n-1}}\right)^{\tilde{q}}
\left(\fint_{B_{2r}}|\A{Du}|^{\tilde{q}}\,dx\right)^{2-p}
+c\left(\frac{|\mu|(B_{2r})}{r^{n-1}}\right)^{\tilde{q}}
\left(\fint_{B_{2r}}|\A{Du}-\A{Dv}|^{\tilde{q}}\,dx\right)^{2-p}\\
&\le
{\frac12} \fint_{B_{2r}}|\A{Du}-\A{Dv}|^{\tilde{q}}\,dx
+ c\left(\frac{|\mu|(B_{2r})}{r^{n-1}}\right)^\frac{\tilde{q}}{p-1}
+c\left(\frac{|\mu|(B_{2r})}{r^{n-1}}\right)^{\tilde{q}}
\left(\fint_{B_{2r}}|\A{Du}|^{\tilde{q}}\,dx\right)^{2-p},
\end{aligned}
\end{equation*}
where $c=c(n,p, \Lambda)>0$. Here in the first inequality, we also have to use the Lemma~\ref{lem:logBMO}.

By absorbing the corresponding term into the left-hand side, we obtain the desired estimate \eqref{est.lem.4.4} in case $2-{\frac1n}<p\le 2$.
This completes the proof of Lemma~\ref{comparison estimate}.
\end{proof}

Finally, combining Lemma~\ref{reverse.Du} and Lemma~\ref{comparison estimate}, we arrive at the following $L^1$-comparison estimate.
\begin{coro}\label{comparison.2}
Assume that \eqref{p} holds. Let $u\in W^{1,p}_{0,\omega}(\Omega)$ be the weak solution to \eqref{main.eq} and $v\in u+W^{1,p}_{0,\omega}(B_{2r})$ be the weak solution to \eqref{eq1}. 
Let ${\kappa_4}={\kappa_4}(n,p,\Lambda)>0$ be the constant defined in \eqref{gamma4}. If
$$[\log\mathbb{M}]_{\mathrm{BMO}({B_{3r}})}<{\kappa_4},$$
then
\begin{equation*}
\fint_{B_{2r}} |\A{Du}-\A{Dv}|\,dx 
\le c\left\{\left(\frac{|\mu|(B_{3r})}{r^{n-1}}\right)^{\frac{1}{p-1}}+\chi_{\{p<2\}}\left(\frac{|\mu|(B_{3r})}{r^{n-1}}\right)\left(\fint_{B_{3r}} |\A{Du}|\,dx\right)^{2-p}\right\},
\end{equation*}
where $c=c(n,p,\Lambda)>0.$
\end{coro}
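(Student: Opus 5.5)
The plan is to deduce the $L^1$ bound from the $L^{\tilde q}$ comparison of Lemma~\ref{comparison estimate}, and then to remove the $L^{\tilde q}$ norm of $\A{Du}$ on the right-hand side with the reverse H\"older inequality of Lemma~\ref{reverse.Du}.

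Since $\tilde q>1$, Jensen's (or H\"older's) inequality gives
$$
\fint_{B_{2r}}|\A{Du}-\A{Dv}|\,dx\le\Bigl(\fint_{B_{2r}}|\A{Du}-\A{Dv}|^{\tilde q}\,dx\Bigr)^{\frac1{\tilde q}}.
$$
Because $\kappa_4\le\kappa_1$ by \eqref{gamma4}, Lemma~\ref{comparison estimate} applies. Taking the $\tilde q$-th root of \eqref{est.lem.4.4} and using $(a+b)^{1/\tilde q}\le a^{1/\tilde q}+b^{1/\tilde q}$, we get
$$
\fint_{B_{2r}}|\A{Du}-\A{Dv}|\,dx\le c\Bigl(\frac{|\mu|(B_{2r})}{r^{n-1}}\Bigr)^{\frac1{p-1}}+c\,\chi_{\{p<2\}}\frac{|\mu|(B_{2r})}{r^{n-1}}\Bigl(\fint_{B_{2r}}|\A{Du}|^{\tilde q}\,dx\Bigr)^{\frac{2-p}{\tilde q}}.
$$
We then replace $|\mu|(B_{2r})$ by the larger quantity $|\mu|(B_{3r})$. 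When $p\ge2$ this already finishes the proof.

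When $2-\frac1n<p<2$, the factor $\bigl(\fint_{B_{2r}}|\A{Du}|^{\tilde q}\bigr)^{1/\tilde q}$ is controlled by \eqref{Reverse-G1}, which is available because $[\log\mathbb M]_{\mathrm{BMO}(B_{3r})}<\kappa_4$ and $\tilde q$ is fixed as in \eqref{tilde.q} (Remark~\ref{rem.comparison}). Raising \eqref{Reverse-G1} to the power $2-p\in(0,1)$ and using subadditivity gives
$$
\frac{|\mu|(B_{3r})}{r^{n-1}}\Bigl(\fint_{B_{3r}}|\A{Du}|\,dx\Bigr)^{2-p}+\frac{|\mu|(B_{3r})}{r^{n-1}}\Bigl(\frac{|\mu|(B_{3r})}{r^{n-1}}\Bigr)^{\frac{2-p}{p-1}}.
$$
The exponents add up correctly: $1+\frac{2-p}{p-1}=\frac1{p-1}$, so the second term equals $\bigl(\frac{|\mu|(B_{3r})}{r^{n-1}}\bigr)^{\frac1{p-1}}$ and merges with the first term of the claimed estimate.

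The only delicate point is this exponent bookkeeping in the subquadratic case, which ensures that the mixed term collapses to the pure $\frac1{p-1}$ power. The smallness condition $\kappa_4\le\min\{\kappa_1,\kappa_2/p\}$ is exactly what allows both lemmas to be applied, and all constants depend only on $n,p,\Lambda$ because $\tilde q$ is fixed.
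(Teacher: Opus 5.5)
Your proposal is correct and follows the paper's intended argument: pass from $L^1$ to $L^{\tilde q}$ by Jensen, apply Lemma~\ref{comparison estimate} (valid since $\kappa_4\le\kappa_1$), and for $p<2$ insert \eqref{Reverse-G1} raised to the power $2-p$. The identity $1+\frac{2-p}{p-1}=\frac1{p-1}$ then merges the extra term into the pure $\frac{1}{p-1}$ power.
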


\section{Proof of Theorem~\ref{theo:main}}
We fix a ball $B_{2R}\Subset\Omega$ with $2R\le R_0$.
Denote $\kappa=\kappa(n,p,\Lambda):=\frac{\kappa_4}{2}$ and assume
\begin{equation*}\label{final.gamma}
[\log\mathbb{M}]_{\operatorname{BMO}_{R_0}(\mathbb{R}^n)}
<{\frac{{\kappa}}{q}},
\end{equation*}
where $\kappa_4$ is defined in \eqref{gamma4}. Recall from \eqref{gamma2} and \eqref{gamma4} that ${\kappa_4} \le \min\left\{\kappa_1,\frac{\kappa_2}{p}\right\} \le  \kappa_3$
and
\begin{equation}\label{kappa.cond}
\frac{\kappa}{q}<\frac{\kappa_4}{q+1}\le\frac{\kappa_2}{p(q+1)}.
\end{equation}
Therefore, the reverse H\"older inequality and comparison estimates established in Section 4 remain valid for the approximating problems.



\subsection{Covering argument}

We begin by introducing the constant
\begin{equation}\label{511}
\lambda_0:=\fint_{B_{2R}} |\A{Du}|\,dx
+\frac{1}{\tau} \fint_{B_{2R}} \left(\mathcal{M}_1(\mu)\right)^{\frac{1}{p-1}}\,dx
\end{equation}
for some $\tau \in (0,1)$ chosen later.
Moreover, for $1\le t\le 2$ and $\lambda>0$, we denote the super-level set of $\A{Du}$ in $B_{tR}$ by
\begin{equation*}\label{512}
E(t,\lambda):=\bigl\{x\in B_{tR}: |\A{Du}|>\lambda\bigr\}.
\end{equation*}
We now fix $1\le t_1<t_2\le 2$.
A slight modification of the argument in {\bf Step 1} of \cite[Section 9]{GM2007} yields the following covering lemma.

\begin{lem}\label{covering}
Let 
\begin{equation*}\label{513}
\lambda>A\lambda_0,\; where \;\;A := \left(\frac{{60}}{t_2-t_1}\right)^{n}.
\end{equation*}
 Then there exists a countable family of pairwise disjoint balls
$\{B_{R_i}(x_i)\}_{i\ge1}$ with $x_i\in B_{t_1 R}$ and $0< R_i \le \frac{(t_2-t_1)R}{{30}}$ such that
$$
E(t_1,\lambda)\setminus \mathcal{N} \subset \bigcup_{i=1}^{\infty} B_{5R_i}(x_i)
\subset B_{t_2 R},
$$
where \(\mathcal{N}\subset \mathbb{R}^n\) is a Lebesgue measure zero set.
Moreover, for each \(i\in\mathbb{N}\),
\begin{equation}\label{514}
\fint_{B_{R_i}(x_i)} |\A{Du}|\,dx
+\frac{1}{\tau}\fint_{B_{R_i}(x_i)} \left(\mathcal{M}_1(\mu)\right)^\frac{1}{p-1}\,dx
=\lambda,
\end{equation}
and 
\begin{equation}\label{515}
\fint_{B_{\rho}(x_i)} |\A{Du}|\,dx
+\frac{1}{\tau}\fint_{B_{\rho}(x_i)} \left(\mathcal{M}_1(\mu)\right)^\frac{1}{p-1}\,dx
<\lambda,
\end{equation}
for every $\rho\in\left(R_i,(t_2-t_1)R\right]$.
\end{lem}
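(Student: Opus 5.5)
The plan is the standard stopping-time (exit-time) argument combined with Vitali's covering lemma. Define, for $x\in B_{t_1R}$ and $\rho>0$,
\[
\Phi(x,\rho):=\fint_{B_{\rho}(x)} |\A{Du}|\,dx+\frac{1}{\tau}\fint_{B_{\rho}(x)} \left(\mathcal{M}_1(\mu)\right)^{\frac{1}{p-1}}\,dx .
\]
The map $\rho\mapsto\Phi(x,\rho)$ is continuous on $(0,\infty)$, because both integrands are locally integrable. This holds since $u$ is the weak solution with smooth data (so $\A{Du}\in L^1_{\rm loc}$ by Lemma~\ref{lem:logBMO} and H\"older) and $\mathcal{M}_1(\mu)^{\frac{1}{p-1}}$ is locally integrable, as is implicitly assumed for $\lambda_0<\infty$.

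\textbf{Step 1: the upper bound for large radii.} For $\rho\in\bigl[\frac{(t_2-t_1)R}{30},(t_2-t_1)R\bigr]$ we have $B_\rho(x)\subset B_{2R}$, since $t_1R+(t_2-t_1)R\le 2R$. Enlarging the domain of integration then gives
\[
\Phi(x,\rho)\le \frac{|B_{2R}|}{|B_\rho|}\lambda_0\le\Bigl(\frac{60}{t_2-t_1}\Bigr)^n\lambda_0=A\lambda_0<\lambda .
\]

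\textbf{Step 2: choosing the exit radius.} Let $x\in E(t_1,\lambda)$ be a Lebesgue point of $\A{Du}$; the non-Lebesgue points form the null set $\mathcal N$. Then $\liminf_{\rho\to0}\Phi(x,\rho)\ge|\A{Du}(x)|>\lambda$, because the second term is nonnegative. By continuity the set $\{\rho\in(0,(t_2-t_1)R]:\Phi(x,\rho)\ge\lambda\}$ is nonempty, and by Step 1 it is contained in $(0,\frac{(t_2-t_1)R}{30})$. Let $\rho_x$ be its supremum. Continuity gives $\Phi(x,\rho_x)=\lambda$, while $\Phi(x,\rho)<\lambda$ for $\rho\in(\rho_x,(t_2-t_1)R]$, and $\rho_x\le\frac{(t_2-t_1)R}{30}$. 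These are exactly \eqref{514} and \eqref{515} at the center $x$.

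\textbf{Step 3: Vitali covering.} The family $\{B_{\rho_x}(x)\}$ has uniformly bounded radii. Vitali's covering lemma therefore yields a countable disjoint subfamily $\{B_{R_i}(x_i)\}$, with $R_i=\rho_{x_i}$, such that every ball of the family lies in some $B_{5R_i}(x_i)$. Hence $E(t_1,\lambda)\setminus\mathcal N\subset\bigcup_i B_{5R_i}(x_i)$. Since $5R_i\le\frac{(t_2-t_1)R}{6}$ and $x_i\in B_{t_1R}$, each $B_{5R_i}(x_i)\subset B_{t_2R}$.

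\textbf{Main obstacle.} The only delicate point is making sure the stopping radius lies strictly below the threshold where Step 1 applies. This is what fixes the constant $60$ in $A$ versus the factor $30$ in the radius bound. One also has to check that $x_i\in B_{t_1R}$ gives the required inclusion, which holds with room to spare.
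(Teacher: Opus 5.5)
Your proposal is correct and follows essentially the argument the paper has in mind: an exit-time choice of radius combined with Vitali's covering lemma, as in Step 1 of Section 9 of Mingione's paper cited by the authors, with the constants $60$ and $30$ fitting together exactly as you check. One minor correction to your integrability justification: in Section 5 the lemma is applied to a SOLA $u$, not to a weak solution with smooth data, but the integrability you need still holds, since $|\A{Du}|\le\omega^{p'}|Du|\in L^{\max\{1,p-1\}}(\Omega)\subset L^1(\Omega)$ by Definition~\ref{defi:SOLA}.
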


As a direct consequence of Lemma~\ref{covering}, we obtain the following estimate. Indeed, for each ball $B_{R_i}(x_i)$, \eqref{514} implies
\begin{equation}\label{516}
\fint_{B_{R_i}(x_i)} |\A{Du}|\,dx \ge \frac{\lambda}{2}
\quad\text{or}\quad
\frac{1}{\tau}\fint_{B_{R_i}(x_i)}
\left(\mathcal{M}_1(\mu)\right)^{\frac{1}{p-1}}\,dx
\ge \frac{\lambda}{2}.
\end{equation}
Moreover, since ${15}R_i\in\left(R_i,(t_2-t_1)R\right]$, taking $\rho={15}R_i$ in \eqref{515} yields
\begin{equation}\label{517}
\fint_{B_{{15}R_i}(x_i)} |\A{Du}|\,dx<\lambda,
\quad\text{and}\quad
\frac{1}{\tau}\fint_{B_{{15}R_i}(x_i)} \left(\mathcal{M}_1(\mu)\right)^{\frac{1}{p-1}}\,dx
<\lambda.
\end{equation}

Recalling that $u$ is a SOLA to \eqref{main.eq}, there exists an approximating sequence of weak solutions $\{u_k\}_{k\ge1}$ to \eqref{weak-1} with a sequence of data $\{\mu_k\}_{k\ge1}$. 
Then by \eqref{weak-2}, we have
\begin{equation*}
 \lim_{k\to\infty} \int_\Omega|\A{Du}-\A{Du_k}|\,dx=0.
 \end{equation*}
Consequently, for each ball $B_{{15}R_i}(x_i)$ defined in Lemma~\ref{covering} and any $\epsilon\in(0,1)$, there exists $k_i\ge1$ such that for all $k\ge k_i$,
\begin{equation}\label{518}
\fint_{B_{{15}R_i}(x_i)} |\A{Du}-\A{Du_k}|\,dx <\epsilon\lambda_0.
\end{equation}

\subsection{Estimates on super-level sets}
For the constant $T>1$ chosen shortly, we take the pairwise disjoint family of balls  $\{B_{R_i}(x_i)\}_{i\ge 1}$ obtained by Lemma~\ref{covering} with $E(t_1,\lambda)$.
Then up to a Lebesgue measure zero set, we have
$$
E(t_1,T\lambda)\subset E(t_1,\lambda)\subset \bigcup_{i=1}^\infty B_{5R_i}(x_i)\subset B_{{t_2}R}.
$$
Therefore, for each $i\in\mathbb{N}$, by denoting
\begin{equation*}\label{def.E}
E_{5R_i}(x_i):=E(t_1,T\lambda)\cap B_{5R_i}(x_i)=\bigl\{x\in B_{t_1 R}\cap B_{5R_i}(x_i):\ |\A{Du}|>T\lambda\bigr\},
\end{equation*}
we get
\begin{equation}\label{522}
\int_{E(t_1,T\lambda)} |\A{Du}|\,dx
\le \sum_{i=1}^\infty \int_{E_{5R_i}(x_i)} |\A{Du}|\,dx.
\end{equation}

Now we fix $i \in \N$. For each $k\ge k_i$, let $v_k \in u_k+W^{1,p}_{0,\omega}(B_{{10}R_i})$ be the weak solution to the homogeneous problem \eqref{eq1}.

We combine \eqref{weak-3}, \eqref{517}, and \eqref{518} to get
\begin{equation}\label{523}
\fint_{B_{{15}R_i}(x_i)} |\A{Du_k}|\,dx<c\lambda
\quad\text{and}\quad
\fint_{B_{{15}R_i}(x_i)} \left(\mathcal{M}_1(\mu_k)\right)^{\frac{1}{p-1}}\,dx
<c\tau\lambda.
\end{equation}
Moreover, for a.e. $x\in E_{5R_i}(x_i)$, we have $|\A{Du}|>T\lambda$. 
Hence
\begin{equation*}
\begin{aligned}
|\A{Du}|
&\le |\A{Du}-\A{Du_k}|+|\A{Du_k}-\A{Dv_k}|+|\A{Dv_k}|\\
&\le |\A{Du}-\A{Du_k}|+|\A{Du_k}-\A{Dv_k}|+|\A{Dv_k}|\left(\frac{|\A{Du}|}{T\lambda}\right)^{\frac{1}{(q+1)'}}\\
&\le|\A{Du}-\A{Du_k}|+|\A{Du_k}-\A{Dv_k}|+\frac{c}{(T\lambda)^{q}}|\A{Dv_k}|^{q+1}+\frac12|\A{Du}|.
\end{aligned}
\end{equation*}
Absorbing the last term into the left-hand side yields
$$|\A{Du}|\le c|\A{Du}-\A{Du_k}|+c|\A{Du_k}-\A{Dv_k}|+\frac{c}{(T\lambda)^{q}}|\A{Dv_k}|^{q+1}.$$
Consequently, Lemma~\ref{Reverse.H}, Corollary~\ref{comparison.2}, \eqref{kappa.cond}, \eqref{518}, and \eqref{523} imply that
\begin{equation}\label{525}
\begin{aligned}
&\int_{E_{5R_i}(x_i)} |\A{Du}|\,dx \\
&\le c|B_{R_i}(x_i)|\fint_{B_{5R_i}(x_i)} |\A{Du}-\A{Du_k}|\,dx
+c|B_{R_i}(x_i)|\fint_{B_{5R_i}(x_i)} |\A{Du_k}-\A{Dv_k}|\,dx\\
&\quad+\frac{c|B_{R_i}(x_i)|}{(T\lambda)^{q}}\left(\fint_{B_{10R_i}(x_i)}|\A{Dv_k}|\,dx\right)^{q+1}\\
&\le c|B_{R_i}(x_i)|\fint_{B_{5R_i}(x_i)} |\A{Du}-\A{Du_k}|\,dx
+c|B_{R_i}(x_i)|\fint_{B_{5R_i}(x_i)} |\A{Du_k}-\A{Dv_k}|\,dx\\
&\quad+\frac{c|B_{R_i}(x_i)|}{(T\lambda)^{q}}\left(\fint_{B_{10R_i}(x_i)}|\A{Du_k}-\A{Dv_k}|\,dx\right)^{q+1}
+\frac{c|B_{R_i}(x_i)|}{(T\lambda)^{q}}\left(\fint_{B_{10R_i}(x_i)}|\A{Du_k}|\,dx\right)^{q+1}\\
&\le c\left(\epsilon+\tau+\tau^{p-1}+\frac{\tau^{q+1}+\tau^{(p-1)(q+1)}+1}{T^{q}}\right)\lambda |B_{R_i}(x_i)|.
\end{aligned}
\end{equation}
We next use \eqref{516} to estimate $|B_{R_i}(x_i)|$. The first inequality therein gives
\begin{equation*}
\begin{aligned}
|B_{R_i}(x_i)|
&\le \frac{2}{\lambda}
\int_{B_{R_i}(x_i)\cap\{|\A{Du}|>\frac{\lambda}{4}\}} |\A{Du}|\,dx
+
\frac{2}{\lambda}
\int_{B_{R_i}(x_i)\cap\{|\A{Du}|\le\frac{\lambda}{4}\}} |\A{Du}|\,dx\\
&\le  \frac{2}{\lambda}
\int_{B_{R_i}(x_i)\cap\{|\A{Du}|>\frac{\lambda}{4}\}} |\A{Du}|\,dx
+\frac12\,|B_{R_i}(x_i)|.
\end{aligned}
\end{equation*}
It follows that
\begin{align}\label{526}
|B_{R_i}(x_i)|
\le \frac{4}{\lambda}
\int_{B_{R_i}(x_i)\cap\{|\A{Du}|>\frac{\lambda}{4}\}} |\A{Du}|\,dx.
\end{align}
Similarly, the second inequality in \eqref{516} yields
\begin{equation}\label{527}
\begin{aligned}
|B_{R_i}(x_i)|
&\le 
\frac{4}{\lambda}\int_{B_{R_i}(x_i)\cap\{\frac{\left(\mathcal{M}_1(\mu)\right)^{1/(p-1)}}{\tau}>\frac{\lambda}{4}\}} \frac{\left(\mathcal{M}_1(\mu)\right)^\frac{1}{p-1}}{\tau}\,dx.
\end{aligned}
\end{equation}
Combining \eqref{526} and \eqref{527}, we obtain
\begin{equation*}
|B_{R_i}(x_i)|
\le \frac{4}{\lambda}
\int_{B_{R_i}(x_i)\cap\{|\A{Du}|>\frac{\lambda}{4}\}} |\A{Du}|\,dx
+\frac{4}{\lambda}\int_{B_{R_i}(x_i)\cap\{\frac{\left(\mathcal{M}_1(\mu)\right)^{1/(p-1)}}{\tau}>\frac{\lambda}{4}\}} \frac{\left(\mathcal{M}_1(\mu)\right)^\frac{1}{p-1}}{\tau}\,dx.
\end{equation*}
Applying \eqref{525} and the preceding estimate to \eqref{522}, we arrive at
\begin{equation}\label{5210}
\begin{aligned}
&\int_{E(t_1,T\lambda)} |\A{Du}|\,dx\\
&\le c\left(\epsilon+\tau+\tau^{p-1}+\frac{\tau^{q+1}+\tau^{(p-1)(q+1)}+1}{T^{q}}\right) 
\int_{B_{t_2 R}\cap\{|\A{Du}|>\frac{\lambda}{4}\}} |\A{Du}|\,dx\\
&\quad+ c\left(\epsilon+\tau+\tau^{p-1}+\frac{\tau^{q+1}+\tau^{(p-1)(q+1)}+1}{T^{q}}\right) 
\int_{B_{t_2 R}\cap\{\frac{\left(\mathcal{M}_1(\mu)\right)^{1/(p-1)}}{\tau}>\frac{\lambda}{4}\}} \frac{\left(\mathcal{M}_1(\mu)\right)^\frac{1}{p-1}}{\tau}\,dx,
\end{aligned}
\end{equation}
where $c=c(n,p,q,\Lambda)>0$.

\subsection{Proof of  the estimate \eqref{main.thm.est}}
For $q>1$ and $h>AT\lambda_0$, we define the truncated function of $|\A{Du}|$ by
$$
|\A{Du}|_h := \min\bigl\{|\A{Du}|,\; h\bigr\}.
$$
By a standard manipulation, we obtain
\begin{equation}\label{531}
\begin{aligned}
&\int_{B_{t_1 R}} |\A{Du}|_{h}^{\,q-1}\,|\A{Du}|\,dx\\ 
&=(q-1)T^{q-1}\int_{B_{t_1 R}}\int_{0}^{\frac{h}{T}}
\lambda^{q-2}\,\chi_{\{|\A{Du}|>T\lambda\}}\,d\lambda\;|\A{Du}|\,dx \\
&=(q-1)T^{q-1}\int_{0}^{\frac{h}{T}}
\lambda^{q-2}\int_{E(t_1,T\lambda)}|\A{Du}|\,dx\;d\lambda\\
&=(q-1)T^{q-1}\int_{0}^{A\lambda_0}
\lambda^{q-2}\int_{E(t_1,T\lambda)}|\A{Du}|\,dx\;d\lambda
+(q-1)T^{q-1}\int_{A\lambda_0}^{\frac{h}{T}}
\lambda^{q-2}\int_{E(t_1,T\lambda)}|\A{Du}|\,dx\;d\lambda\\
&=:\mathbb{I}_1+\mathbb{I}_2.
\end{aligned}
\end{equation}

$\mathbb {I}_1$ is estimated by
\begin{equation}\label{533}
\begin{aligned}
\mathbb{I}_1
\le (q-1)T^{q-1}\int_{0}^{A\lambda_0}\lambda^{q-2}d\lambda
\int_{B_{t_1R}}|\A{Du}|\,dx
\le \frac{c{\lambda_0}^{q-1}}{(t_2-t_1)^{n(q-1)}}
\int_{B_{2R}} |\A{Du}|\,dx.
\end{aligned}
\end{equation}

We next estimate $\mathbb {I}_2$ by means of \eqref{5210} as follows:
\begin{equation*}
\begin{aligned}
\mathbb{I}_2
&\le
c\left[\left(\epsilon+\tau+\tau^{p-1}\right)T^{q-1}+\frac{\tau^{q+1}+\tau^{(p-1)(q+1)}+1}{T}\right] \\
& \quad \times \left[ \int_{0}^{4h} \lambda^{q-2}
\int_{B_{t_2 R}\cap\{|\A{Du}|>\frac{\lambda}{4}\}} |\A{Du}|\,dx\;d\lambda \right. \\
&\qquad+ \left. \int_{0}^{\infty} \lambda^{q-2}
\int_{B_{t_2 R}\cap\{\frac{\left(\mathcal{M}_1(\mu)\right)^{1/(p-1)}}{\tau}>\frac{\lambda}{4}\}} \frac{\left(\mathcal{M}_1(\mu)\right)^\frac{1}{p-1}}{\tau}\,dx\;d\lambda \right].
\end{aligned}
\end{equation*}
{Noting that $c=c(n, p, q, \Lambda)>0$, we can take $T$ large enough and $\epsilon, \tau$ small enough to satisfy}
\begin{equation}\label{535}
\begin{aligned}
\mathbb{I}_2
\le {\frac12}
\int_{B_{t_2 R}} |\A{Du}|_{h}^{q-1}\,|\A{Du}|\,dx
+c\int_{B_{2 R}} \left(\mathcal{M}_1(\mu)\right)^\frac{q}{p-1}\,dx.
\end{aligned}
\end{equation}

Substituting \eqref{533} and \eqref{535} into \eqref{531},  we obtain
\begin{equation*}\label{536}
\begin{aligned}
&\int_{B_{t_1 R}} |\A{Du}|_{h}^{\,q-1}\,|\A{Du}|\,dx \\
&\le\frac{c{\lambda_0}^{q-1}}{(t_2-t_1)^{n(q-1)}}
\int_{B_{2R}} |\A{Du}|\,dx
+{\frac12}
\int_{B_{t_2 R}} |\A{Du}|_{h}^{q-1}\,|\A{Du}|\,dx
+c
\int_{B_{2 R}} \left(\mathcal{M}_1(\mu)\right)^\frac{q}{p-1}\,dx.
\end{aligned}
\end{equation*}
Since $1\le t_1 <t_2\le2$, Lemma~\ref{lem-iteration} gives
\begin{equation}\label{539}
\begin{aligned}
\int_{B_{R}} |\A{Du}|_{h}^{\,q-1}\,|\A{Du}|\,dx 
\le 
c{\lambda_0}^{q-1}
\int_{B_{2 R}} |\A{Du}|\,dx
+c\int_{B_{2 R}} \left(\mathcal{M}_1(\mu)\right)^\frac{q}{p-1}\,dx.
\end{aligned}
\end{equation}
Therefore, by letting $h\to\infty$ in \eqref{539} and applying the monotone convergence theorem, we obtain
\begin{equation*}
\begin{aligned}
\fint_{B_{R}} |\A{Du}|^q\,dx 
\le 
c{\lambda_0}^{q-1}
\fint_{B_{2 R}} |\A{Du}|\,dx
+c\fint_{B_{2 R}} \left(\mathcal{M}_1(\mu)\right)^\frac{q}{p-1}\,dx.
\end{aligned}
\end{equation*}
Recalling the definition of $\lambda_0$ in \eqref{511}, we infer that
{\begin{equation*}
\begin{aligned}
\fint_{B_{R}} \,|\A{Du}|^{q}\,dx 
&\le c \left(\fint_{B_{2 R}} |\A{Du}|\,dx\right)^q
+c\fint_{B_{2 R}} \left(\mathcal{M}_1(\mu)\right)^\frac{q}{p-1}\,dx\\
&\quad +c\left(\fint_{B_{2 R}} |\A{Du}|\,dx\right)
\left( \fint_{B_{2R}} \left(\mathcal{M}_1(\mu)\right)^\frac{1}{p-1}\,dx\right)^{q-1} \\
&\le c \left(\fint_{B_{2 R}} |\A{Du}|\,dx\right)^q
+c\fint_{B_{2R}} \left(\mathcal{M}_1(\mu)\right)^\frac{q}{p-1}\,dx.
\end{aligned}
\end{equation*}}
This completes the proof of \eqref{main.thm.est}.

\,

\noindent\textbf{Disclosure statement}

\noindent The authors report there are no competing interests to declare.

\,

\noindent\textbf{Funding.}

\noindent Sun-Sig Byun was supported by Mid-Career Bridging Program through Seoul National University.
Xiatong Li was supported by National Research Foundation of Korea (NRF) grant funded by the Korean
government [Grant No. 2022R1A2C1009312].
Yeonghun Youn was supported by National Research Foundation of Korea (NRF) grant funded by the Korean government [Grant No. RS-2025-00555316]. 

\,

\noindent\textbf{Data availability statement.}

\noindent This paper has no associated data.

\end{document}